    \newcommand{\be}{\begin{equation}}
    \newcommand{\ee}{\end{equation}}
    \newcommand{\nrm}[1]{\left\| #1 \right\|}
    \def\f{\mbox{\boldmath $f$}}
    \def\v{\mbox{\boldmath $v$}}
    \def\w{\mbox{\boldmath $w$}}
    \newcommand\dt {{\Delta t}}
     \def\0{\mbox{\boldmath $0$}}
	\newtheorem{thm}{Theorem}[section]
	\newtheorem{prop}[thm]{Proposition}
	\newtheorem{cor}[thm]{Corollary}
	\newtheorem{lem}[thm]{Lemma}
	\newtheorem{rem}[thm]{Remark}
\begin{document}
	\title{A third order BDF energy stable linear scheme for the no-slope-selection thin film model}
	
	\author{
Yonghong Hao \thanks{College of Applied Sciences, Beijing University of Technology, Beijing 100124, P. R. China (15933223502@163.com)}
\and
Qiumei Huang \thanks{College of Applied Sciences, Beijing University of Technology, Beijing 100124, P. R. China (qmhuang@bjut.edu.cn)}
\and		
Cheng Wang\thanks{Department of Mathematics, The University of Massachusetts, North Dartmouth, MA  02747, USA (Corresponding Author: cwang1@umassd.edu)}
}

	\maketitle
	\numberwithin{equation}{section}

	\begin{abstract}
In this paper we propose and analyze a (temporally) third order accurate backward differentiation formula (BDF) numerical scheme for the no-slope-selection (NSS) equation of the epitaxial thin film growth model, with Fourier pseudo-spectral discretization in space. The surface diffusion term is treated implicitly, while the nonlinear chemical potential is approximated by a third order explicit extrapolation formula for the sake of solvability. In addition, a third order accurate Douglas-Dupont regularization term, in the form of $-A \dt^2 \Delta_N^2 ( u^{n+1} - u^n)$, is added in the numerical scheme. A careful energy stability estimate, combined with Fourier eigenvalue analysis, results in the energy stability in a modified version, and a theoretical justification of the coefficient $A$ becomes available. As a result of this energy stability analysis, a uniform in time bound of the numerical energy is obtained. And also, the optimal rate convergence analysis and error estimate are derived in details, in the $\ell^\infty (0,T; \ell^2) \cap \ell^2 (0,T; H_h^2)$ norm, with the help of a linearized estimate for the nonlinear error terms. 
Some numerical simulation results are presented to demonstrate the efficiency of the numerical scheme and the third order convergence. The long time simulation results for $\varepsilon=0.02$ (up to $T=3 \times 10^5$) have indicated a logarithm law for the energy decay, as well as the power laws for growth of the surface roughness and the mound width. In particular, the power index for the surface roughness and the mound width growth, created by the third order numerical scheme, is more accurate than those produced by certain second order energy stable schemes in the existing literature.
	\end{abstract}
	
\noindent
{\bf Key words.} \, epitaxial thin film growth, no-slope-selection, third order backward differentiation formula,  energy stability, optimal rate convergence analysis 
	
\medskip
	
\noindent
{\bf AMS Subject Classification} \, 35K30, 35K55, 65L06, 65M12, 65M70, 65T40

\section{Introduction}
In this article we consider a no-slope-selection (NSS) epitaxial thin film growth equation, which corresponds to the $L^2$ gradient flow associated with the following energy functional
\begin{equation}\label{energy-NSS}
E(u) := \int_{\Omega}\left(- \frac{1}{2} \ln(1+|\nabla u|^2) +\frac{\varepsilon^2}{2} |\Delta u|^2\right)\mbox{d} \textbf{x},
\end{equation}
where $\Omega = (0, L_x) \times (0, L_y)$, $u:\Omega\rightarrow \mathbb{R}$ is a periodic height function, and  $\varepsilon$ is a constant parameter of transition layer width. In more details, the first nonlinear term represents the Ehrlich-Schwoebel (ES) effect~\cite{Ehrlich1966, libo06, libo03, libo04, Schwoebel1969}, 
which results in an uphill atom current in the dynamics and the steepening of mounds in the film.  The second higher order quadratic term represents the isotropic surface diffusion effect~\cite{libo03, moldovan00}. In turn, the chemical potential becomes the following variational derivative of the energy
\begin{equation}  \label{NSS-chem pot}
 \mu := \delta_{u}E = \nabla\cdot\left(\frac{\nabla u}{1+|\nabla u|^2}\right) + \varepsilon^2 \Delta^2 u,
\end{equation}
and the PDE stands for the $L^2$ gradient flow
\begin{equation}\label{equation-NSS}
\partial_t u = - \mu = -\nabla\cdot\left(\frac{\nabla u}{1+|\nabla u|^2}\right) - \varepsilon^2 \Delta^2 u .
\end{equation}
Meanwhile, under a small-slope assumption that $|\nabla u|^2\ll 1$, the energy functional could be approximated by a polynomial pattern
\begin{equation}  \label{energy-SS}
E(u) = \int_{\Omega}\left( \frac14 ( | \nabla u |^2 -1)^2 +\frac{\varepsilon^2}{2} |\Delta u|^2\right)\mbox{d} \textbf{x} ,
\end{equation}
and the dynamical equation is formulated as
	\begin{equation}
\partial_t u =  \nabla \cdot \left(|\nabla u|^2\nabla u \right) - \Delta u - \varepsilon^2 \Delta^2 u .
	\label{equation-SS}
	\end{equation}
This model is referred to as the slope-selection (SS) equation~\cite{kohn06, kohn03, libo03, moldovan00}.  A solution to (\ref{equation-SS}) exhibits pyramidal structures, where the faces of the pyramids have slopes $|\nabla u| \approx 1$; meanwhile, the no-slope-selection equation (\ref{equation-NSS}) exhibits mound-like structures, and the slopes of which (on an infinite domain) may grow unbounded~\cite{libo03, wang10}. On the other hand, both solutions have up-down symmetry in the sense that there is no way to distinguish a hill from a valley.  This can be altered by adding adsorption/desorption or other dynamics.



The numerical schemes with high order accuracy and energy stability have been of great interests, due to the long time nature of the gradient flow coarsening process. There have been many efforts to devise and analyze energy stable numerical schemes for both the SS and NSS equations; see the related references~\cite{chenwang12, chen14, Huang2019, Ju18, LiD2016a, Liao2020a, qiao12, qiao15, qiao17, qiao12b, shen12, wang10, WangS2020a, xu06, yang17b}, etc.  
In particular, the linear schemes have been attracted a great amount of attentions among the energy stable numerical approaches, due to its simplicity of implementation. 
In the existing works~\cite{chen12, LiW18}, the authors proposed linear algorithms for the NSS equation, with first and second order temporal accuracy orders, respectively, so that the energy stability could be established at a theoretical level. Such a highly nonlinear energy stability analysis is based on the following subtle fact: in spite of its complicated form in the denominator, the nonlinear term in the NSS equation~\eqref{equation-NSS} has automatically bounded higher order derivatives in the $L^\infty$ norm. Such an idea has also been applied to derive the exponential time differencing (ETD) based numerical schemes for the NSS equations, with certain modified energy stability; see the related works~\cite{chen20a, Ju18}, etc. 

  Among the energy stable numerical works for the epitaxial thin film model, the theoretical analysis of a (temporally) third order accurate numerical scheme is very limited. There have been two recent works~\cite{chen20b, cheng2019c} to address the energy stability of third order accurate schemes for the NSS equation, based on the ETD approach. Meanwhile, it is observed that, some non-trivial operators have been involved in the numerical implementation of the ETD-based higher oder schemes, which may lead to a high computational cost. In this article, we propose and analyze a third order accurate  numerical scheme for the NSS equation~\eqref{equation-NSS}, based on the standard BDF3 temporal approximation, combined with certain explicit extrapolation formula for the nonlinear term. Again, such an explicit treatment to the nonlinear terms would not be able to ensure the energy stability at the theoretical level. To overcome this difficulty, we have to add a third order Douglas-Dupont regularization term in the numerical scheme, namely in the form of $-A \dt^2  \Delta_N^2 ( u^{n+1} - u^n)$. Furthermore, a careful energy estimate enables us to derive a rigorous stability estimate for a modified energy function, which contains the original energy functional and a few non-negative numerical correction terms. In fact, the subtle fact that all the nonlinear terms have automatically bounded higher order derivative will play an important role in the highly complicated nonlinear analysis. The Fourier pseudo-spectral method is taken as the spatial approximation, and the discrete summation by parts property will facilitate the corresponding analysis for the fully discrete scheme. As a result of this modified energy stability, we are able to derive a uniform-in-time bound for the original energy functional bound.
In addition to the energy stability analysis, we provide a theoretical analysis of an $O (\dt^3 + h^m)$ rate convergence estimate for the proposed third order BDF (BDF3) scheme, in the $\ell^\infty(0,T; \ell^2) \cap \ell^2 (0, T; H_h^2)$ norm. It is well-known that a direct inner product with the numerical error equation by $e^{n+1}$ (the error function at time step $t^{n+1}$) does not lead to the desired result, because of the long stencil structure involved. Instead, an inner product with $e^{n+1} + (e^{n+1} - e^n)$ is considered in the analysis, originated from an existing work~\cite{JLiu2013}. In turn, the full order convergence result is expected via detailed numerical error estimates. Again, a uniform bound of the nonlinear derivatives will play an important role in the derivation of such an error estimate. And also, all the spatial operators are associated with the standard derivatives, so that a complicated eigenvalue analysis could be avoided in the derivation of the optimal convergence, in contrast with the ones reported in~\cite{chen20b, cheng2019c}.

The long time simulation results for the coarsening process have indicated a logarithm law for the energy decay, as well as the power laws for growth of the surface roughness and the mound width. In particular, the power index for the surface roughness and the mound width growth, created by the proposed third order BDF scheme, is more accurate than those created by certain second order schemes in the existing literature, with the same numerical resolution. This experiment has demonstrated the robustness of the proposed BDF3 numerical scheme.

The rest of the article is organized as follows. In Section~\ref{sec-num scheme} we present the numerical scheme, including the review of the Fourier pseudo-spectral spatial approximation. Afterward, a modified energy stability is established for the proposed third order BDF scheme.  Subsequently, the $\ell^\infty(0,T; \ell^2) \cap \ell^2 (0, T; H_h^2)$ convergence estimate is provided in Section~\ref{sec-convergence}. In Section~\ref{sec:numerical results} we present the numerical results, including the accuracy test and the long time simulation for the coarsening process.   Finally, the concluding remarks are given in Section~\ref{sec:conclusion}.


	\section{The numerical scheme}
	\label{sec-num scheme}
	
\subsection{Review of the Fourier pseudo-spectral approximation}

For simplicity of presentation, we assume that the domain is given by $\Omega = (0,L)^2$, $N_x = N_y = N$ and $N \cdot h = L$. A more general domain could be treated in a similar manner. Furthermore, to facilitate the pseudo-spectral analysis in later sections, we set $N = 2K+1$. All the variables are evaluated at the regular numerical grid $(x_i, y_j)$, with $x_i = i h$, $y_j=jh$, $0 \le i , j \le 2K +1$.

Without loss of generality, we assume that $L=1$. For a periodic function $f$ over the given 2-D numerical grid, set its discrete Fourier expansion as
\begin{equation}
  f_{i,j} = \sum_{k,\ell=-K}^{K}
   \hat{f}_{k,\ell} \exp \left( 2 \pi {\rm i} ( k x_i + \ell y_j ) \right) ,
   \label{spectral-coll-1}
\end{equation}
its collocation Fourier spectral approximations to first and second order partial derivatives in the $x$-direction become
\begin{eqnarray}
  \left( {\cal D}_{Nx} f \right)_{i,j} = \sum_{k,\ell=-K}^{K}
   \left( 2 k \pi {\rm i} \right) \hat{f}_{k,\ell}
   \exp \left( 2 \pi {\rm i} ( k x_i + \ell y_j ) \right) ,
   \label{spectral-coll-2-1}
\\
  \left( {\cal D}_{Nx}^2 f \right)_{i,j} = \sum_{k,\ell=-K}^{K}
   \left( - 4 \pi^2 k^2 \right) \hat{f}_{k,\ell}
   \exp \left( 2 \pi {\rm i} ( k x_i + \ell y_j) \right) .
   \label{spectral-coll-2-3}
\end{eqnarray}
The differentiation operators in the $y$ direction, namely, ${\cal D}_{Ny}$ and ${\cal D}_{Ny}^2$, could be defined in the same fashion. In turn, the discrete Laplacian, gradient
and divergence become
\begin{eqnarray}
  \Delta_N f =  \left( {\cal D}_{Nx}^2  + {\cal D}_{Ny}^2 \right) f ,  \nonumber
\\
  \nabla_N f = \left(  \begin{array}{c}
  {\cal D}_{Nx} f  \\
  {\cal D}_{Ny} f
  \end{array}  \right)  ,  \quad
  \nabla_N \cdot \left(  \begin{array}{c}
  f _1 \\
  f _2
  \end{array}  \right)  = {\cal D}_{Nx} f_1 + {\cal D}_{Ny} f_2 ,
  \label{spectral-coll-3}
\end{eqnarray}
at the point-wise level. 
See the derivations in the related references~\cite{Boyd2001, canuto82, Gottlieb1977, HGG2007}, etc.

  Given any periodic grid functions $f$ and $g$ (over the 2-D numerical grid), the spectral approximations to the $L^2$ inner product and $L^2$ norm are introduced as
\begin{eqnarray}
  \left\| f \right\|_2 = \sqrt{ \left\langle f , f \right\rangle } ,  \quad \mbox{with} \quad
  \left\langle f , g \right\rangle  = h^2 \sum_{i,j=0}^{N -1}   f_{i,j} g_{i,j} .
  \label{spectral-coll-inner product-1}
\end{eqnarray}
A careful calculation yields the following formulas of summation by parts at the discrete level (see the related discussions~\cite{chen12, chen14, gottlieb12a, gottlieb12b}):
\begin{eqnarray}
  \left\langle f ,  \Delta_N  g  \right\rangle
  = - \left\langle \nabla_N f ,  \nabla_N g   \right\rangle  ,    \quad
  \left\langle f ,  \Delta_N^2  g  \right\rangle
  =  \left\langle \Delta_N f ,  \Delta_N g   \right\rangle  .
  \label{spectral-coll-inner product-3}
\end{eqnarray}
Similarly, for any grid function $f$ with $\overline{f} := h^2 \sum_{i,j=0}^{N-1} f_{i,j} = 0$, the operator $(-\Delta_N)^{-1}$ and the discrete $\| \cdot \|_{-1}$ norm are defined as
\begin{eqnarray}
  &&
  \left( (-\Delta _N)^{-1} f \right)_{i,j} = \sum_{k,\ell \ne \0}
   \frac{1}{\lambda_{k,\ell}} \hat{f}_{k,\ell}
   \exp \left( 2 \pi {\rm i} ( k x_i + \ell y_j) \right) ,  \quad
   \lambda_{k,\ell} = (2 k \pi)^2 + (2 \ell \pi)^2 ,
   \label{spectral-coll-6-1}
 \\
  &&
  \left\| f \right\|_{-1} = \sqrt{ \left\langle f , (-\Delta_N )^{-1} f \right\rangle }   .
   \label{spectral-coll-inner product-4}
\end{eqnarray}



In addition to the standard $\ell^2$ norm, we also introduce the $\ell^p$ and discrete maximum norms for a grid function $f$, to facilitate the analysis in later sections:
\begin{equation}
 \nrm{f}_{\infty} := \max_{i,j} |f_{i,j}| ,   \qquad
 \nrm{f}_{p}  := \Bigl( h^2 \sum_{i,j=0}^{N-1} |f_{i,j} |^p \Bigr)^{\frac{1}{p}} , \quad 1\leq p < \infty.  \label{spectral-defi-Lp}
\end{equation}
Moreover, for any numerical solution $\phi$, the discrete energy is defined as
	\be
E_N (\phi) = E_{c,1,N} (\phi) + \frac{\varepsilon^2}{2} \nrm{\Delta_N \phi}_2^2 \  , \quad E_{c,1,N} (\phi) = h^2 \sum_{i,j=0}^{N-1} \left( - \frac12 \ln \left( 1 + \left| \nabla_N \phi \right|^2 \right)_{i,j} \right) .
	\label{energy-discrete-spectral}
	\ee

\subsection{The proposed third order BDF numerical scheme}

  As usual, we denote $u^k$ as the numerical approximation to the PDE solution at time step $t^k := k \dt$, with any integer $n$. Given $u^n$, $u^{n-1}$, $u^{n-2}$, we propose a third order BDF-type scheme for the NSS equation~\eqref{equation-NSS}:
\begin{eqnarray}
  &&
  \frac{\frac{11}{6} u^{n+1} - 3 u^n + \frac32 u^{n-1} - \frac13 u^{n-2}}{\dt}
  +\varepsilon^2 \Delta_N^2 u^{n+1} +
 \nabla_N \cdot \Bigl( 3 \frac{\nabla_N u^n}{1+ |\nabla_N u^n|^2}  \nonumber
\\
  &&  \quad
  -3 \frac{\nabla_N u^{n-1}}{1+ |\nabla_N u^{n-1}|^2}
  + \frac{\nabla_N u^{n-2}}{1+ |\nabla_N u^{n-2}|^2} \Bigr)
  +A \dt^2 \Delta_N^2( u^{n+1} - u^n) = 0 .
  \label{scheme-BDF3-0}
\end{eqnarray}

\begin{rem}
Since the third order algorithm~\eqref{scheme-BDF3-0} is a three-step scheme, the ``ghost" point approximations $u^{-1}$ and $u^{-2}$ are needed in the initial time step. If we take $u^{-2} = u^{-1} = u^0$, the energy bound estimate~\eqref{energy bound-1} becomes very simple, while the third order numerical accuracy may have been lost in the initial step. Instead, we could use alternate explicit high-order numerical algorithms, such as $RK2$ and $RK3$, to update the numerical solutions at $u^1$ and $u^2$, so that the third order numerical accuracy is preserved in the first few time steps. This approach enables one to derive the full third order temporal convergence estimate in the above theorem. In particular, we notice that the first two $RK$ time steps in the initial approximation will not cause any stability concern, since they are treated as the initial values in the numerical scheme.
\end{rem}

\begin{prop}  \label{prop:mass conservation}
For any initial data with $\overline{u^0} = \overline{u^1} = \overline{u^2} = \beta_0$, the third order numerical scheme~\eqref{scheme-BDF3-0} is mass conservative, i.e, $\overline{u^{n+1}} = \overline{u^n} = ... = \beta_0$ for any $n \ge 2$.
\end{prop}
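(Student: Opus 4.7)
The plan is to take the discrete spatial mean of both sides of the scheme \eqref{scheme-BDF3-0} and observe that every spatial operator on the left-hand side, apart from the BDF3 time-difference, contributes zero mean. This then reduces the mass conservation claim to a linear recursion on the means $\overline{u^k}$, which I would solve by induction.

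Concretely, I would first record three elementary identities for periodic grid functions on the $N\times N$ pseudo-spectral grid. Using the Fourier expansion of $f$ in \eqref{spectral-coll-1}, the operators $\Delta_N$, $\nabla_N$ and ${\cal D}_{Nx},{\cal D}_{Ny}$ annihilate the zeroth Fourier mode, so for any periodic grid functions $f$ and vector field $(f_1,f_2)$,
\begin{equation*}
\overline{\Delta_N f}=0,\qquad \overline{\Delta_N^2 f}=0,\qquad \overline{\nabla_N\cdot(f_1,f_2)}=0.
\end{equation*}
Equivalently, pairing with the constant grid function $1$ and applying the discrete summation-by-parts formulas \eqref{spectral-coll-inner product-3} (since $\nabla_N 1=0$) gives the same conclusion. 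Applied to the scheme, this immediately yields $\overline{\Delta_N^2 u^{n+1}}=0$, $\overline{\Delta_N^2(u^{n+1}-u^n)}=0$, and, with the vector field
\begin{equation*}
\mathbf{F}^{n}:=3\frac{\nabla_N u^n}{1+|\nabla_N u^n|^2}-3\frac{\nabla_N u^{n-1}}{1+|\nabla_N u^{n-1}|^2}+\frac{\nabla_N u^{n-2}}{1+|\nabla_N u^{n-2}|^2},
\end{equation*}
also $\overline{\nabla_N\cdot\mathbf{F}^{n}}=0$.

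Taking the mean of \eqref{scheme-BDF3-0} therefore collapses the equation to the scalar recursion
\begin{equation*}
\tfrac{11}{6}\,\overline{u^{n+1}}-3\,\overline{u^{n}}+\tfrac{3}{2}\,\overline{u^{n-1}}-\tfrac{1}{3}\,\overline{u^{n-2}}=0,
\end{equation*}
valid for every $n\ge 2$. I would then proceed by induction on $n$: the base case $n=2$ is the hypothesis $\overline{u^0}=\overline{u^1}=\overline{u^2}=\beta_0$, and if $\overline{u^{n-2}}=\overline{u^{n-1}}=\overline{u^{n}}=\beta_0$, the recursion gives $\tfrac{11}{6}\overline{u^{n+1}}=(3-\tfrac{3}{2}+\tfrac{1}{3})\beta_0=\tfrac{11}{6}\beta_0$, hence $\overline{u^{n+1}}=\beta_0$.

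There is essentially no obstacle: the only point requiring a brief remark is the pseudo-spectral nature of the divergence acting on the nonlinear flux $\mathbf{F}^n$. Since the flux is constructed component-wise as ${\cal D}_{Nx}$ or ${\cal D}_{Ny}$ applied to a periodic grid function after the point-wise scalar quotient, each component is itself a periodic grid function, and the discrete divergence is a sum of collocation derivatives whose zeroth Fourier coefficient vanishes. Hence the mean-zero property propagates cleanly through the nonlinearity, and the argument is complete.
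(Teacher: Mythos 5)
Your proposal is correct and follows essentially the same route as the paper's own proof: both take the discrete mean of the scheme, use the vanishing mean of $\Delta_N^2$ and of the pseudo-spectral divergence applied to the nonlinear flux to reduce to the scalar recursion $\frac{11}{6}\overline{u^{n+1}} - 3\overline{u^n} + \frac32 \overline{u^{n-1}} - \frac13 \overline{u^{n-2}} = 0$, and conclude by induction. Your explicit remark about the zeroth Fourier mode and the check $3-\frac32+\frac13=\frac{11}{6}$ are fine additions but do not change the argument.
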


\begin{proof}
First of all, the following identity is valid, due to the periodic boundary condition for a vector $\f$ and a scalar function $g$:
\begin{eqnarray}
  \overline{ \nabla_N \cdot \f } = 0 ,  \quad
  \overline{\Delta_N g} = \overline{\nabla_N \cdot \nabla_N g} =0  .
\end{eqnarray}
In turn, for the third order scheme~\eqref{scheme-BDF3-0}, all the updated terms have a zero-mean:
\begin{equation}
\begin{aligned}
  &  \Delta_N^2 u^{n+1} = 0 ,  \quad   \Delta_N^2( u^{n+1} - u^n) = 0 ,
\\
  & \nabla_N \cdot \Bigl( 3 \frac{\nabla_N u^n}{1+ |\nabla_N u^n|^2}
    -3 \frac{\nabla_N u^{n-1}}{1+ |\nabla_N u^{n-1}|^2}
  + \frac{\nabla_N u^{n-2}}{1+ |\nabla_N u^{n-2}|^2} \Bigr)  = 0 .
\end{aligned}
\end{equation}
As a consequence, a substitution into~\eqref{scheme-BDF3-0} implies that
\begin{equation}
  \frac{11}{6} \overline{u^{n+1}} - 3 \overline{u^n}
  + \frac32 \overline{u^{n-1}} - \frac13 \overline{u^{n-2}} = 0 ,  \quad \forall n \ge 2.
\end{equation}
With an application of induction analysis, we conclude that
\begin{equation}
    \overline{u^{n+1}} = \overline{u^n} = \overline{u^{n-1}} = \overline{u^{n-2}} = ...
    = \overline{u^0} = \beta_0 ,
\end{equation}
provided that $\overline{u^0} = \overline{u^1} = \overline{u^2} = \beta_0$. This finishes the proof of Proposition~\ref{prop:mass conservation}.
\end{proof}

\subsection{The energy stability analysis}


The following two preliminary estimates in~\cite{Ju18} will be useful in the energy stability analysis.

\begin{lem} \cite{Ju18} \label{lem:vector inequality}
Denote a mapping ${\bf \beta}: R^2 \to R^2$: ${\bf \beta} (\v) = \frac{\v}{1 + | \v|^2 }$. Then we have
\begin{eqnarray}
  | {\bf \beta} (\v) - {\bf \beta} (\w) | \le | \v - \w | ,  \quad \forall \v, \, \w \, \in R^2 .
  \label{vector inequality-0}
\end{eqnarray}
\end{lem}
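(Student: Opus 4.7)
The natural plan is to reduce the Lipschitz estimate to a pointwise bound on the Jacobian of $\beta$, then apply the mean value inequality along the segment joining $\mathbf{w}$ and $\mathbf{v}$. Writing $\beta_i(\mathbf{v}) = v_i/(1+|\mathbf{v}|^2)$ for $i=1,2$, a direct differentiation gives
\begin{equation*}
D\beta(\mathbf{v}) \;=\; \frac{1}{(1+|\mathbf{v}|^2)^2}\Bigl[(1+|\mathbf{v}|^2) I - 2\,\mathbf{v}\mathbf{v}^{\!\top}\Bigr],
\end{equation*}
which is a symmetric $2\times 2$ matrix. Thus the whole question reduces to bounding the spectral radius of $D\beta(\mathbf{v})$ by $1$ uniformly in $\mathbf{v}\in\mathbb{R}^2$.

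Next, I would diagonalize this matrix by exploiting the rank-one structure of $\mathbf{v}\mathbf{v}^{\!\top}$. The eigenvector $\mathbf{v}/|\mathbf{v}|$ gives eigenvalue $(1-|\mathbf{v}|^2)/(1+|\mathbf{v}|^2)^2$, and any unit vector perpendicular to $\mathbf{v}$ gives eigenvalue $1/(1+|\mathbf{v}|^2)$. The second eigenvalue is clearly in $(0,1]$, so the only real check is $\bigl|(1-r)/(1+r)^2\bigr|\le 1$ for all $r=|\mathbf{v}|^2\ge 0$, which reduces to the elementary inequality $(1+r)^2 \ge |1-r|$, easily verified by splitting into the cases $r\le 1$ and $r>1$. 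This yields $\|D\beta(\mathbf{v})\|_{\mathrm{op}}\le 1$ for every $\mathbf{v}$.

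Finally, I would close the argument by the standard vector mean value identity
\begin{equation*}
\beta(\mathbf{v}) - \beta(\mathbf{w}) \;=\; \int_0^1 D\beta\bigl(\mathbf{w} + t(\mathbf{v}-\mathbf{w})\bigr)\,(\mathbf{v}-\mathbf{w})\,dt,
\end{equation*}
applying the triangle inequality under the integral and inserting the uniform bound $\|D\beta\|_{\mathrm{op}}\le 1$ to obtain $|\beta(\mathbf{v})-\beta(\mathbf{w})| \le |\mathbf{v}-\mathbf{w}|$. The main (and really only) obstacle is the eigenvalue verification in the step above; everything else is mechanical. As an alternative route, one could instead write $\beta(\mathbf{v})-\beta(\mathbf{w})$ over a common denominator $(1+|\mathbf{v}|^2)(1+|\mathbf{w}|^2)$, algebraically re-express the numerator using the identity $\mathbf{v}|\mathbf{w}|^2 - \mathbf{w}|\mathbf{v}|^2 = (\mathbf{v}\cdot\mathbf{w})(\mathbf{v}-\mathbf{w}) - (\mathbf{v}+\mathbf{w})\,\mathbf{w}\cdot(\mathbf{v}-\mathbf{w}) + \cdots$, and then estimate by Cauchy--Schwarz, but this is noticeably more error-prone than the Jacobian route and I would reserve it as a backup.
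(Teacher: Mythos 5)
Your proof is correct, and there is nothing in the paper to compare it against: Lemma~\ref{lem:vector inequality} is quoted from \cite{Ju18} and stated without proof here, so your argument stands on its own. The route you take is the standard one and it is complete: the Jacobian $D\beta(\mathbf{v})=\bigl[(1+|\mathbf{v}|^2)I-2\,\mathbf{v}\mathbf{v}^{\top}\bigr]/(1+|\mathbf{v}|^2)^2$ is symmetric with eigenvalues $(1-|\mathbf{v}|^2)/(1+|\mathbf{v}|^2)^2$ (along $\mathbf{v}$) and $1/(1+|\mathbf{v}|^2)$ (orthogonal to $\mathbf{v}$), both of absolute value at most $1$ since $|1-r|\le(1+r)^2$ for $r\ge 0$, and the integral mean value identity (valid because $\beta$ is smooth on all of $\mathbb{R}^2$) then gives $|\beta(\mathbf{v})-\beta(\mathbf{w})|\le|\mathbf{v}-\mathbf{w}|$. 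The only cosmetic remark is that the eigenvector description degenerates at $\mathbf{v}=\mathbf{0}$, where simply $D\beta(\mathbf{0})=I$ with norm exactly $1$; this also shows the Lipschitz constant $1$ is sharp, consistent with the lemma's statement.
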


\begin{lem} \cite{Ju18} \label{lem:convexity}
Define $H (a,b) = \frac12 \ln (1 + a^2 + b^2) + \frac{\kappa_0}{2} (a^2 + b^2)$. Then $H (a,b)$ is convex in $R^2$ if and only if $\kappa_0 \ge \frac18$.
\end{lem}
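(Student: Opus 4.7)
The plan is to check convexity of $H$ via its Hessian, and the task reduces to finding the minimum eigenvalue of the Hessian of the logarithmic part, since the quadratic part contributes $\kappa_0 I$. So $H$ is convex on $\mathbb{R}^2$ iff the minimum eigenvalue of the log-Hessian plus $\kappa_0$ is nonnegative.

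First, I would compute the Hessian of $L(a,b) := \frac12 \ln(1+a^2+b^2)$ directly. With $D := 1+a^2+b^2$, one finds
\[
L_{aa} = \frac{1-a^2+b^2}{D^2}, \qquad L_{bb} = \frac{1+a^2-b^2}{D^2}, \qquad L_{ab} = -\frac{2ab}{D^2}.
\]
Next, I would exploit the rotational symmetry: since $L$ depends only on $r := \sqrt{a^2+b^2}$, its Hessian has the same spectrum at every point of the circle $a^2+b^2 = r^2$. Evaluating at the convenient point $(a,b)=(r,0)$ makes the off-diagonal entry vanish, so the eigenvalues of the log-Hessian at that point (hence on the whole circle) are simply
\[
\lambda_1(r) = \frac{1-r^2}{(1+r^2)^2}, \qquad \lambda_2(r) = \frac{1}{1+r^2}.
\]
Clearly $\lambda_2(r) > 0$ for all $r$, so the only potential obstruction to positivity comes from $\lambda_1(r)$, which is negative for $r>1$.

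The main calculation is then to minimize $\lambda_1(r)$ over $r \ge 0$. Setting $s := r^2$ and differentiating $g(s) = (1-s)/(1+s)^2$ yields $g'(s) = (s-3)/(1+s)^3$, so the unique critical point is $s = 3$, and it is a minimum with value $g(3) = -1/8$. Thus the minimum eigenvalue of the Hessian of $L$ over all $(a,b)\in\mathbb{R}^2$ equals $-1/8$, attained on the circle $a^2+b^2 = 3$. Adding the quadratic term $\frac{\kappa_0}{2}(a^2+b^2)$ shifts every eigenvalue of the Hessian by $+\kappa_0$, so the Hessian of $H$ is positive semidefinite everywhere iff $\kappa_0 - \tfrac18 \ge 0$. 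Conversely, if $\kappa_0 < 1/8$, the Hessian of $H$ has a strictly negative eigenvalue at any point with $a^2+b^2 = 3$, so $H$ fails to be convex there. This gives the sharp threshold $\kappa_0 \ge 1/8$.

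The only subtle step is recognizing the rotational reduction, since a direct attempt to show $H_{aa}H_{bb}-H_{ab}^2 \ge 0$ via the $2\times 2$ determinant condition leads to a cubic in $s = a^2+b^2$ whose sign analysis is more tedious; going through eigenvalues sidesteps that algebra and simultaneously exhibits the critical configuration where convexity is lost.
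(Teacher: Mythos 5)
Your argument is correct and complete. The paper itself gives no proof of this lemma --- it is quoted directly from the reference [Ju et al.] and used as a black box in Lemma 2.4 and Theorem 2.5 --- so there is no in-paper derivation to compare against. Your route is the natural one and is carried out accurately: the Hessian entries of $L(a,b)=\frac12\ln(1+a^2+b^2)$ are computed correctly, the rotational invariance legitimately reduces the spectrum to the point $(r,0)$, where the eigenvalues $\frac{1-r^2}{(1+r^2)^2}$ (radial) and $\frac{1}{1+r^2}$ (tangential) are read off, and the minimization $g(s)=\frac{1-s}{(1+s)^2}$, $g'(s)=\frac{s-3}{(1+s)^3}$, correctly identifies the sharp value $-\frac18$ at $s=a^2+b^2=3$; since the quadratic term contributes exactly $\kappa_0 I$, the shift-of-eigenvalues conclusion and the converse (a strictly negative radial eigenvalue on the circle $a^2+b^2=3$ when $\kappa_0<\frac18$) give both directions of the equivalence. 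The standard alternative you mention --- imposing $H_{aa}\ge 0$ together with $H_{aa}H_{bb}-H_{ab}^2\ge 0$, which after clearing $(1+s)^3$ becomes a polynomial inequality in $s$ --- is essentially what one finds in proofs of this type in the cited literature; your eigenvalue reduction buys a cleaner computation and, as a bonus, exhibits the extremal configuration $|\nabla u|^2=3$ at which the bound $\kappa_0=\frac18$ is attained, confirming that the constant used in the energy stability analysis (where the paper takes the optimal value $\kappa_0=\frac18$) cannot be improved.
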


As a result of these convexity results, we are able to obtain the following energy estimate.

\begin{lem} \label{lem: energy stab-0}
For the numerical solutions $u^{n+1}$ and $u^n$ with $\overline{u^{n+1}} = \overline{u^n}$, we have
\begin{eqnarray}
  &&
   \Bigl\langle \nabla_N \cdot \Big( \frac{\nabla_N u^n}{1+|\nabla_N u^n|^2} \Big)  ,
   u^{n+1} - u^n \Bigr\rangle  \nonumber
\\
  &\ge&
    E_{c,1,N} (u^{n+1}) - E_{c,1,N} (u^n)
   - \frac{\kappa_0}{2} \| \nabla_N ( u^{n+1} - u^n ) \|_2^2 ,
   \label{energy stab-0}
\end{eqnarray}
with the nonlinear energy functional $E_{c,1,N} (\phi)$ defined in~\eqref{energy-discrete-spectral}.
\end{lem}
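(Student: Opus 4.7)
The plan is to exploit the pointwise convexity supplied by Lemma~\ref{lem:convexity}, then use the discrete summation-by-parts identity~\eqref{spectral-coll-inner product-3} to recognize the inner product on the left of~\eqref{energy stab-0}. Introduce $G(\v) := \tfrac12 \ln(1+|\v|^2)$ for $\v \in \mathbb{R}^2$, so that the density of $E_{c,1,N}$ is $-G(\nabla_N \phi)$ evaluated pointwise and $\nabla G(\v) = \v / (1+|\v|^2)$. The function $G$ itself is \emph{not} convex, but Lemma~\ref{lem:convexity} guarantees that $G(\v) + \tfrac{\kappa_0}{2}|\v|^2$ is convex whenever $\kappa_0 \ge 1/8$, which is exactly the structure needed.

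Next, I would apply the standard first-order convexity inequality to $G + \tfrac{\kappa_0}{2}|\cdot|^2$ and rearrange. Completing the square in the linear cross-term, the inequality
\begin{equation*}
G(\w) - G(\v) \ge \nabla G(\v)\cdot(\w - \v) - \frac{\kappa_0}{2}|\w - \v|^2,\quad \forall \v,\w\in\mathbb{R}^2,
\end{equation*}
follows (the terms $\kappa_0 \v\cdot(\w-\v)$ and $-\tfrac{\kappa_0}{2}(|\w|^2-|\v|^2)$ combine into $-\tfrac{\kappa_0}{2}|\w-\v|^2$). I would then apply this pointwise with $\v = (\nabla_N u^n)_{i,j}$ and $\w = (\nabla_N u^{n+1})_{i,j}$, multiply by $-h^2$, and sum over $0 \le i,j \le N-1$. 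The left side becomes $E_{c,1,N}(u^{n+1}) - E_{c,1,N}(u^n)$, and the quadratic remainder sums to $\tfrac{\kappa_0}{2}\|\nabla_N(u^{n+1}-u^n)\|_2^2$, giving
\begin{equation*}
E_{c,1,N}(u^{n+1}) - E_{c,1,N}(u^n) \le -\Bigl\langle \frac{\nabla_N u^n}{1+|\nabla_N u^n|^2},\, \nabla_N(u^{n+1}-u^n)\Bigr\rangle + \frac{\kappa_0}{2}\|\nabla_N(u^{n+1}-u^n)\|_2^2.
\end{equation*}

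Finally, I would invoke the discrete summation-by-parts identity (the vector-valued analogue of the first identity in~\eqref{spectral-coll-inner product-3}, valid under periodic boundary conditions in the Fourier pseudo-spectral setting) to move $\nabla_N$ from $u^{n+1}-u^n$ onto the nonlinear flux, rewriting the mixed term as $\langle \nabla_N\cdot(\nabla_N u^n/(1+|\nabla_N u^n|^2)),\, u^{n+1}-u^n\rangle$. Rearranging yields~\eqref{energy stab-0} exactly. The only delicate point is the algebraic completion of the square that converts Lemma~\ref{lem:convexity} (a statement about the scalar function $H$ of two real arguments) into a useful inequality for the vector-valued gradient $\nabla_N u$; Lemma~\ref{lem:vector inequality} is not needed for this estimate but is reserved for the nonlinear error analysis later in the paper. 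The hypothesis $\overline{u^{n+1}} = \overline{u^n}$ is not required for the pointwise convexity argument itself, but ensures consistency with the mass-conservation framework of Proposition~\ref{prop:mass conservation} under which the scheme is analyzed.
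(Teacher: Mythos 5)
Your proof is correct and follows essentially the same route as the paper: both rest on the convexity of $G(\v)+\tfrac{\kappa_0}{2}|\v|^2$ from Lemma~\ref{lem:convexity} with $\kappa_0=\tfrac18$, combined with discrete summation by parts. The paper phrases this at the functional level (the concave functional $H_N$ and its variational derivative $f_N^{(0)}$, with the equivalence to~\eqref{energy stab-0} left implicit), whereas you carry out the pointwise convexity inequality and the square-completion explicitly, which is just a more detailed rendering of the same argument.
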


\begin{proof}
For simplicity of presentation, we denote
\begin{eqnarray}
  f_N^{(0)} (u) = \nabla_N \cdot \left( \frac{\nabla_N u}{1+|\nabla_N u|^2}\right) + \kappa _0 \Delta_N u . \label{energy stab-1}  
\end{eqnarray}
By Lemma~\ref{lem:convexity}, $f_N^{(0)} (u)$ corresponds to a concave energy functional, so that the following convexity inequality is valid:
\begin{eqnarray}
  \langle f_N^{(0)} (u^n) ,  u^{n+1} - u^n \rangle \ge H_{N} (u^{n+1}) - H_N (u^n) , \quad \mbox{with} \, \, \, H_N (\phi) =  E_{c,1,N} (\phi) - \frac{\kappa_0}{2} \| \nabla_N \phi \|_2^2 ,  \label{energy stab-2}
\end{eqnarray}
which is equivalent to~\eqref{energy stab-0}. This finishes the proof of Lemma~\ref{lem: energy stab-0}.
\end{proof}

The energy stability of the proposed third order BDF-type scheme~\eqref{scheme-BDF3-0}  is stated in the following theorem, in a modified version.  

\begin{thm}  \label{thm:energy stab-BDF3}
  The numerical solution produced by the proposed BDF-type scheme~\eqref{scheme-BDF3-0} satisfies
\begin{eqnarray}
  \tilde{E}_N (u^{n+1} , u^n, u^{n-1}) &\le& \tilde{E}_N (u^n, u^{n-1}, u^{n-2}) ,   \quad \mbox{with}  \nonumber
\\
  \tilde{E}_N (u^{n+1} , u^n, u^{n-1}) &=& E_N (u^{n+1})
   + \frac{3}{4 \dt} \| u^{n+1} - u^n \|_2^2 + \frac{1}{6 \dt} \| u^n - u^{n-1} \|_2^2   \nonumber
\\
  &&
  + \frac32 \| \nabla_N ( u^{n+1} - u^n ) \|_2^2
  + \frac12 \| \nabla_N ( u^n - u^{n-1} ) \|_2^2  ,  \label{scheme-BDF3-stability-0}
\end{eqnarray}
for any $\dt >0$, provided that $A \ge \frac{9}{32} ( \frac{49}{16} )^4 \varepsilon^{-2}$.
\end{thm}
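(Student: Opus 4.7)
The plan is to take the discrete $\ell^2$ inner product of scheme~\eqref{scheme-BDF3-0} with the increment $u^{n+1}-u^n$, since this choice is what makes Lemma~\ref{lem: energy stab-0} applicable to the dominant nonlinear piece. Using the shorthand $\phi=u^{n+1}-u^n$, $\psi=u^n-u^{n-1}$, $\chi=u^{n-1}-u^{n-2}$, the BDF3 combination rewrites as $\tfrac{11}{6}\phi-\tfrac{7}{6}\psi+\tfrac13\chi$. Expanding the inner product of this quantity against $\phi$ via the standard polarization identities, and handling the cross term by $\|\phi-\chi\|_2^2\le 2\|\phi\|_2^2+2\|\chi\|_2^2$, should produce a lower bound of the form
\begin{equation*}
\tfrac{1}{\dt}\bigl\langle \tfrac{11}{6}u^{n+1}-3u^n+\tfrac32 u^{n-1}-\tfrac13 u^{n-2},\phi\bigr\rangle \ge \tfrac{3}{4\dt}\|\phi\|_2^2-\tfrac{7}{12\dt}\|\psi\|_2^2-\tfrac{1}{6\dt}\|\chi\|_2^2+\tfrac{1}{3\dt}\|\phi\|_2^2+\tfrac{7}{12\dt}\|\phi-\psi\|_2^2,
\end{equation*}
whose first three summands telescope exactly into the temporal $\ell^2$ part of $\tilde E_N^{n+1}-\tilde E_N^n$ while the last two are nonnegative slack reserved for the final step.

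For the linear surface-diffusion term, discrete summation by parts~\eqref{spectral-coll-inner product-3} followed by the elementary inequality $2x(x-y)\ge x^2-y^2+(x-y)^2$ yields $\varepsilon^2\langle\Delta_N u^{n+1},\Delta_N\phi\rangle\ge\tfrac{\varepsilon^2}{2}(\|\Delta_N u^{n+1}\|_2^2-\|\Delta_N u^n\|_2^2)+\tfrac{\varepsilon^2}{2}\|\Delta_N\phi\|_2^2$, which reproduces the surface-diffusion part of $E_N^{n+1}-E_N^n$ and leaves an extra $\tfrac{\varepsilon^2}{2}\|\Delta_N\phi\|_2^2$ of dissipation; the Douglas--Dupont regularization contributes $A\dt^2\|\Delta_N\phi\|_2^2$ directly. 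For the extrapolated nonlinear flux I would use the splitting
\begin{equation*}
3\mathbf f^n-3\mathbf f^{n-1}+\mathbf f^{n-2}=\mathbf f^n+2(\mathbf f^n-\mathbf f^{n-1})-(\mathbf f^{n-1}-\mathbf f^{n-2}),\qquad \mathbf f^k:=\tfrac{\nabla_N u^k}{1+|\nabla_N u^k|^2}.
\end{equation*}
Lemma~\ref{lem: energy stab-0} applied to the leading piece gives $E_{c,1,N}(u^{n+1})-E_{c,1,N}(u^n)-\tfrac{\kappa_0}{2}\|\nabla_N\phi\|_2^2$ with $\kappa_0=\tfrac18$, while summation by parts together with the Lipschitz estimate of Lemma~\ref{lem:vector inequality} controls the two difference pieces by $2\|\nabla_N\psi\|_2\|\nabla_N\phi\|_2$ and $\|\nabla_N\chi\|_2\|\nabla_N\phi\|_2$ in absolute value.

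The key bookkeeping step is to use the tailored Young-type inequalities $2ab-b^2\le a^2$ and $ab-\tfrac12 b^2\le\tfrac12 a^2$ (with $a=\|\nabla_N\phi\|_2$ and $b$ being $\|\nabla_N\psi\|_2$ or $\|\nabla_N\chi\|_2$) so that the telescoping gradient coefficients $-\|\nabla_N\psi\|_2^2$ and $-\tfrac12\|\nabla_N\chi\|_2^2$ coming from $\tilde E_N^{n+1}-\tilde E_N^n$ exactly absorb the mixed products produced by the nonlinear extrapolation error. Collecting all residual gradient contributions one finds the clean coefficient $\tfrac32+\tfrac{\kappa_0}{2}+1+\tfrac12=\tfrac{49}{16}$, and the whole estimate reduces to
\begin{equation*}
\tilde E_N^{n+1}-\tilde E_N^n \le \tfrac{49}{16}\|\nabla_N\phi\|_2^2-\tfrac{1}{3\dt}\|\phi\|_2^2-\Bigl(\tfrac{\varepsilon^2}{2}+A\dt^2\Bigr)\|\Delta_N\phi\|_2^2.
\end{equation*}

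The closing step is to bound $\|\nabla_N\phi\|_2^2\le\|\phi\|_2\|\Delta_N\phi\|_2$ by discrete integration by parts and Cauchy--Schwarz, and then split with AM--GM with a free weight $\alpha$. The two resulting constraints $\alpha\ge \tfrac{147\dt}{32}$ and $\alpha\le\tfrac{32}{49}\bigl(\tfrac{\varepsilon^2}{2}+A\dt^2\bigr)$ are simultaneously solvable for every $\dt>0$ precisely when the quadratic $A\dt^2-\tfrac{7203}{1024}\dt+\tfrac{\varepsilon^2}{2}\ge 0$ holds for all $\dt$, i.e.\ when its discriminant is nonpositive; using $7203=3\cdot 49^2$ this is exactly $A\ge\tfrac{9}{32}\bigl(\tfrac{49}{16}\bigr)^4\varepsilon^{-2}$, matching the hypothesis. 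I expect the main obstacle to be precisely the bookkeeping in the penultimate paragraph: the Young weights must be chosen so that \emph{both} the $\|\nabla_N\psi\|^2$ and $\|\nabla_N\chi\|^2$ coefficients land on the telescoping energy and the surplus gradient contribution agglomerates into the single clean $\tfrac{49}{16}$, without which the scalar step would fail to close with a constant $A$ independent of $\dt$.
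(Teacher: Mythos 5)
Your proposal is correct and follows essentially the same route as the paper's proof: testing with $u^{n+1}-u^n$, splitting the extrapolated flux as $f^n+2(f^n-f^{n-1})-(f^{n-1}-f^{n-2})$ with the convexity lemma ($\kappa_0=\tfrac18$) on the leading piece and the Lipschitz bound on the increments, absorbing the cross terms into the telescoping modified energy to isolate the net $\tfrac{49}{16}\|\nabla_N(u^{n+1}-u^n)\|_2^2$, and closing via $\|\nabla_N\phi\|_2^2\le\|\phi\|_2\|\Delta_N\phi\|_2$ against the $\tfrac{1}{3\dt}\|\phi\|_2^2$ and $(\tfrac{\varepsilon^2}{2}+A\dt^2)\|\Delta_N\phi\|_2^2$ dissipation. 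Your free-weight Young inequality plus discriminant condition is algebraically equivalent to the paper's two-step AM--GM ($\tfrac{\varepsilon^2}{2}+A\dt^2\ge\sqrt{2A}\,\varepsilon\dt$) and yields the identical threshold $A\ge\tfrac{9}{32}(\tfrac{49}{16})^4\varepsilon^{-2}$.
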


\begin{proof}
The numerical scheme~\eqref{scheme-BDF3-0} could be rewritten as
\begin{equation} \label{scheme-BDF3-stability-1}
\begin{split}
\nabla_N\cdot \Big( \frac{\nabla_N u^n}{1+|\nabla_N u^n|^2} \Big) =&-\frac{\frac{11}{6} u^{n+1} - 3 u^n +\frac32 u^{n-1} - \frac13 u^{n-2}}{\dt}
\\
  &
  - \nabla_N \cdot \Bigl( 2 \frac{\nabla_N u^n}{1+|\nabla_N u^n|^2}
  - 3 \frac{\nabla_N u^{n-1}}{1+|\nabla_N u^{n-1}|^2} + \frac{\nabla_N u^{n-2}}{1+|\nabla_N u^{n-2}|^2} \Bigr)
\\
  &
  - \varepsilon^2 \Delta_N^2 u^{n+1}  - A \dt^2 \Delta_N^2(u^{n+1} - u^n) .
\end{split}
\end{equation}
Taking a discrete $\ell^2$ inner product with~\eqref{scheme-BDF3-stability-1} by $u^{n+1}-u^n$ yields
\begin{equation} \label{scheme-BDF3-stability-2}
\begin{split}
&\Bigl\langle \nabla_N\cdot \Big( \frac{\nabla_N u^n}{1+|\nabla_N u^n|^2} \Big) ,
  u^{n+1} - u^n \Bigr\rangle
+ \frac{1}{\dt} \Big\langle \frac{11}{6} u^{n+1} - 3 u^n + \frac32 u^{n-1}
-\frac13 u^{n-2} , u^{n+1} - u^n \Big\rangle \\
+&
\Bigl\langle \nabla_N \cdot \Big( 2 \frac{\nabla_N u^n}{1+|\nabla_N u^n|^2}
- 3 \frac{\nabla_N u^{n-1}}{1+|\nabla_N u^{n-1}|^2}
+ \frac{\nabla_N u^{n-2}}{1+|\nabla_N u^{n-2}|^2} \Big), u^{n+1} - u^n \Bigr\rangle \\
+&
 \varepsilon^2 \langle \Delta_N^2 u^{n+1}, u^{n+1} - u^n\rangle
 + A \dt^2\langle \Delta_N^2 (u^{n+1} - u^n), u^{n+1} - u^n \rangle = 0 .
\end{split}
\end{equation}
The temporal stencil term could be analyzed as follows:
\begin{equation}  \label{scheme-BDF3-stability-3}
\begin{split}
& \Big\langle\frac{11}{6} u^{n+1} - 3 u^n + \frac32 u^{n-1}
- \frac13 u^{n-2}, u^{n+1} - u^n \Big\rangle \\
=& \Big\langle \frac{11}{6}( u^{n+1} - u^n) - \frac76 ( u^n - u^{n-1})
+ \frac13 (u^{n-1} - u^{n-2}), u^{n+1} - u^n \Big\rangle
\\
\ge& \frac{11}{6} \| u^{n+1} - u^n\|^2_2
-\frac{7}{12}( \| u^n - u^{n-1}\|^2_2
+ \| u^{n+1} - u^n\|^2_2 )
\\&-\frac16 (\| u^{n-1} - u^{n-2}\|^2_2 + \| u^{n+1} - u^n\|^2_2 )
\\
=& \frac13 \| u^{n+1} - u^n\|^2_2
+\frac34 (\| u^{n+1} - u^n\|^2_2 - \| u^n - u^{n-1}\|^2_2)
\\
 &  + \frac16 (\| u^n - u^{n-1}\|^2_2 - \| u^{n-1} - u^{n-2}\|^2_2 ) .
\end{split}
\end{equation}
For the nonlinear increment term, the following estimate could be derived:
\begin{equation}  \label{scheme-BDF3-stability-4}
\begin{split}
& - \Bigl\langle 2 \frac{\nabla_N u^n}{1+|\nabla_N u^n|^2}
 - 3 \frac{\nabla_N u^{n-1}}{1+|\nabla_N u^{n-1}|^2}
 + \frac{\nabla_N u^{n-2}}{1+|\nabla_N u^{n-2}|^2},
 \nabla_N( u^{n+1} - u^n) \Bigr\rangle
\\=& - 2 \Bigl\langle \frac{\nabla_N u^n}{1+|\nabla_N u^n|^2} - \frac{\nabla_N u^{n-1}}{1+|\nabla_N u^{n-1}|^2}, \nabla_N (u^{n+1} - u^n) \Bigr\rangle
\\& + \Bigl\langle \frac{\nabla_N u^{n-1}}{1+|\nabla_N u^{n-1}|^2}
 - \frac{\nabla_N u^{n-2}}{1+|\nabla_N u^{n-2}|^2},
  \nabla_N( u^{n+1} - u^n) \Big\rangle
\\
  \le&
  2 \|\nabla_N ( u^n - u^{n-1})\|_2 \cdot \|\nabla_N ( u^{n+1} - u^n) \|_2
  + \|\nabla_N ( u^{n-1} - u^{n-2} )\|_2 \cdot \|\nabla_N ( u^{n+1} - u^n) \|_2
\\
  \le&
  \frac32 \|\nabla_N ( u^{n+1} - u^n) \|^2_2
+ \|\nabla_N ( u^n - u^{n-1} ) \|^2_2
+\frac12 \| \nabla_N ( u^{n-1} - u^{n-2} )\|^2_2 ,
\end{split}
\end{equation}
in which the point-wise inequality~\eqref{vector inequality-0} (in Lemma~\ref{lem:vector inequality}) has been applied in the second step. For the surface diffusion term, the following identity is available:
\begin{equation}   \label{scheme-BDF3-stability-5}
 \langle \Delta_N^2 u^{n+1}, u^{n+1} - u^n \rangle
= \frac12 ( \|\Delta_N u^{n+1} \|^2_2 - \|\Delta_N u^n \|_2^2
 + \| \Delta_N ( u^{n+1} - u^n ) \|^2_2 ) .
\end{equation}
Similarly, the following identity is straightforward to the artificial Douglas-Dupont regularization term:
\begin{equation}  \label{scheme-BDF3-stability-6}
 A \dt^2 \langle \Delta_N^2 (u^{n+1} - u ^n), u^{n+1} - u^n \rangle
= A \dt^2 \| \Delta_N ( u^{n+1} - u^n ) \|^2_2 .
\end{equation}
As a consequence, a substitution of~\eqref{energy stab-0} and \eqref{scheme-BDF3-stability-2}-\eqref{scheme-BDF3-stability-6} into \eqref{scheme-BDF3-stability-1} results in
\begin{equation} \label{scheme-BDF3-stability-7}
\begin{split}
& E_N (u^{n+1}) - E_N (u^n)
  + ( \frac{\varepsilon^2}{2} + A \dt^2 ) \| \Delta_N ( u^{n+1} - u^n )\|^2_2
   + \frac{1}{3 \dt} \| u^{n+1} - u^n\|^2_2
\\
 & +\frac{3}{4 \dt} (\| u^{n+1} - u^n\|^2_2 - \| u^n - u^{n-1}\|^2_2)
+ \frac{1}{6 \dt} (\| u^n - u^{n-1}\|^2_2 - \| u^{n-1} - u^{n-2}\|^2_2 )
\\
  \le&
  \frac{25}{16} \|\nabla_N ( u^{n+1} - u^n) \|^2_2
+ \|\nabla_N ( u^n - u^{n-1} ) \|^2_2
+\frac12 \| \nabla_N ( u^{n-1} - u^{n-2} )\|^2_2 ,
\end{split}
\end{equation}
with the optimal value of $\kappa_0 = \frac18$ taken. To control the right hand side of~\eqref{scheme-BDF3-stability-7}, we begin with the following quadratic inequality
\begin{equation}  \label{scheme-BDF3-stability-8-1}
    \frac{\varepsilon^2}{2} + A \dt^2
    \ge \sqrt{2} A^{1/2} \varepsilon \dt ,
\end{equation}
which in turn leads to
\begin{equation} \label{scheme-BDF3-stability-8-2}
\begin{split}
& ( \frac{\varepsilon^2}{2} + A \dt^2 ) \| \Delta_N ( u^{n+1} - u^n )\|^2_2
   + \frac{1}{3 \dt} \| u^{n+1} - u^n\|^2_2
\\
 \ge &  \sqrt{2} A^{1/2} \varepsilon \dt  \| \Delta_N ( u^{n+1} - u^n )\|^2_2
   + \frac{1}{3 \dt} \| u^{n+1} - u^n\|^2_2
\\
  \ge&
  2^{5/4} 3^{-1/2} A^{1/4} \varepsilon^{1/2}   \| \Delta_N ( u^{n+1} - u^n )\|_2
   \cdot \| u^{n+1} - u^n\|_2
\\
  \ge &
   2^{5/4} 3^{-1/2} A^{1/4} \varepsilon^{1/2}   \| \nabla_N ( u^{n+1} - u^n ) \|_2^2 ,
\end{split}
\end{equation}
in which the summation by parts formula has been applied in the last step, as well as the following estimate:
\begin{equation}
  \| \nabla_N ( u^{n+1} - u^n ) \|_2^2
  = - \langle  u^{n+1} - u^n , \Delta_N ( u^{n+1} - u^n ) \rangle
  \le   \| \Delta_N ( u^{n+1} - u^n )\|_2 \cdot \| u^{n+1} - u^n\|_2  .
\end{equation}
Under the constraint that
\begin{equation} \label{constraint-1}
   2^{5/4} 3^{-1/2} A^{1/4} \varepsilon^{1/2}   \ge \frac{49}{16} ,  \quad \mbox{i.e.} \, \, \,
   A \ge \frac{9}{32} ( \frac{49}{16} )^4 \varepsilon^{-2} ,
\end{equation}
the following inequality is valid:
\begin{equation} \label{scheme-BDF3-stability-9}
\begin{split}
& E_N (u^{n+1}) - E_N (u^n)
  +  \frac{49}{16} \| \nabla_N ( u^{n+1} - u^n ) \|_2^2
\\
 & +\frac{3}{4 \dt} (\| u^{n+1} - u^n\|^2_2 - \| u^n - u^{n-1}\|^2_2)
+ \frac{1}{6 \dt} (\| u^n - u^{n-1}\|^2_2 - \| u^{n-1} - u^{n-2}\|^2_2 )
\\
  \le&
  \frac{25}{16} \|\nabla_N ( u^{n+1} - u^n) \|^2_2
+ \|\nabla_N ( u^n - u^{n-1} ) \|^2_2
+\frac12 \| \nabla_N ( u^{n-1} - u^{n-2} )\|^2_2 .
\end{split}
\end{equation}
In fact, \eqref{scheme-BDF3-stability-9} is equivalent to
\begin{equation}  \label{scheme-BDF3-stability-10}
  \tilde{E}_N (u^{n+1} , u^n, u^{n-1}) - \tilde{E}_N (u^n, u^{n-1}, u^{n-2}) \le 0 .
\end{equation}
This finishes the proof of Theorem~\ref{thm:energy stab-BDF3}.
\end{proof}

\begin{cor}  \label{cor:energy bound-BDF3}
  For the numerical solution~\eqref{scheme-BDF3-0}, we have
\begin{eqnarray}
  E_N (u^k) &\le& E_N (u^0) + \frac{3}{4 \dt} \| u^0 - u^{-1} \|_2^2
  + \frac{1}{6 \dt} \| u^{-1} - u^{-2} \|_2^2  \nonumber
\\
  &&
  + \frac32 \| \nabla_N (u^0 - u^{-1} ) \|_2^2
  + \frac12 \| \nabla_N (u^{-1} - u^{-2}) \|_2^2 := \tilde{C}_0 , \quad \forall k \ge 0 ,
  \label{energy bound-0}
\end{eqnarray}
provided that~\eqref{constraint-1} is satisfied.
\end{cor}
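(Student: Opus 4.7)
The plan is to derive the corollary as a direct consequence of the modified energy stability in Theorem~\ref{thm:energy stab-BDF3}. The key observation is that every term appended to $E_N(\phi)$ in the definition of $\tilde{E}_N(\phi, \psi, \chi)$ in~\eqref{scheme-BDF3-stability-0} is a non-negative quadratic correction, namely a weighted discrete $\ell^2$ difference or $\nabla_N$-difference of consecutive iterates. Therefore, once the augmented functional $\tilde{E}_N$ is shown to be non-increasing along the BDF3 trajectory, a uniform-in-time bound on the genuine discrete energy $E_N(u^k)$ follows immediately by discarding the non-negative correction terms.

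Concretely, I would iterate the inequality $\tilde{E}_N(u^{n+1}, u^n, u^{n-1}) \le \tilde{E}_N(u^n, u^{n-1}, u^{n-2})$ supplied by Theorem~\ref{thm:energy stab-BDF3}, starting from the initial triple $(u^0, u^{-1}, u^{-2})$ (using the ghost-point values discussed in the remark following~\eqref{scheme-BDF3-0}). A straightforward induction on $k$ then gives
\begin{equation*}
  \tilde{E}_N(u^k, u^{k-1}, u^{k-2}) \le \tilde{E}_N(u^0, u^{-1}, u^{-2}) = \tilde{C}_0, \qquad \forall \, k \ge 1 .
\end{equation*}
Since each of the four correction terms in $\tilde{E}_N(u^k, u^{k-1}, u^{k-2})$ is non-negative, one has $E_N(u^k) \le \tilde{E}_N(u^k, u^{k-1}, u^{k-2})$, and chaining the two inequalities yields the claimed bound $E_N(u^k) \le \tilde{C}_0$. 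The case $k = 0$ is immediate from the definition of $\tilde{C}_0$, which dominates $E_N(u^0)$ by the same non-negativity remark.

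There is no real technical obstacle here: the corollary is a pure telescoping/monotonicity statement built on top of Theorem~\ref{thm:energy stab-BDF3}. Since that stability result holds for any $\dt > 0$ under the structural condition~\eqref{constraint-1} on the regularization parameter $A$, the resulting bound on $E_N(u^k)$ is unconditional in the time step, and the constant $\tilde{C}_0$ depends only on the initial triple $(u^0, u^{-1}, u^{-2})$. This is precisely the uniform-in-time control of the discrete energy advertised in the introduction, and it will feed into the $L^\infty$-type a priori bounds used later in the convergence analysis of Section~\ref{sec-convergence}.
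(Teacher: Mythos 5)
Your proposal is correct and is essentially the paper's own argument: iterate the modified energy inequality of Theorem~\ref{thm:energy stab-BDF3} down to the initial triple, note that $\tilde{E}_N(u^0,u^{-1},u^{-2})$ equals the stated constant $\tilde{C}_0$, and drop the non-negative correction terms to bound $E_N(u^k)$. No gaps.
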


\begin{proof}
By the modified energy inequality~\eqref{scheme-BDF3-stability-0}, the following induction analysis could be performed:
\begin{eqnarray}
  \hspace{-0.35in} &&
  E_N (u^k ) \le \tilde{E}_N (u^k , u^{k-1}, u^{k-2}) \le ... \le  \tilde{E}_N (u^0 , u^{-1}, u^{-2}) := \tilde{C}_0 ,  \quad \forall k \ge 0 .
  \label{energy bound-1}
\end{eqnarray}
\end{proof}

\begin{rem}
By the detailed expansion~\eqref{scheme-BDF3-stability-0} of the modified energy $\tilde{E}_N (u^{n+1}, u^n, u^{n-1})$, it is observed to be an $O (\dt)$ approximation to the original energy functional $E_N (u^{n+1})$ (at time step $t^{n+1}$), not a third order accurate approximation. In fact, the $O (\dt)$ approximation order comes from two correction terms, namely, $\frac{3}{4 \dt} \| u^{n+1} - u^n \|_2^2$ and $\frac{1}{6 \dt} \| u^n - u^{n-1} \|_2^2$. An additional stability property in the temporal stencil enables us to derive such a modified energy stability. On the other hand, although there is $O (\dt)$ difference between the modified and original energy functionals, such a modified stability ensures a uniform-in-time bound for the original energy functional, due to the non-negative feature of all the correction terms.
\end{rem}

\begin{rem}
The requirement~\eqref{constraint-1} for the parameter $A$ indicates an order of $A=O (\varepsilon^{-2})$. Such a requirement is based on a subtle fact that, an extra stability estimate from the surface diffusion term has to be used to balance the stability loss coming from the multi-step explicit treatment of the nonlinear terms, and the surface diffusion coefficient is given by $\varepsilon^2$ in the physical parameter.

On the other hand, such a parameter order $A= O (\varepsilon^{-2})$ is only used for the theoretical justification of the energy stability. In the practical computations, an extensive choice of $A= O (1)$ has never led to any energy stability loss for the proposed third order scheme, although its stability could not be theoretically justified. In fact, the effect of the artificial regularization coefficients has been investigated in a recent article~\cite{Meng2020}, for the second order energy stable BDF2 scheme for the NSS equation~\eqref{equation-NSS}. The theoretical analysis indicates a minimum value of $A_0 = \frac{289}{1024}$ to obtain a modified energy stability for the second order BDF-type scheme, while the numerical simulation with an even smaller $A_0 = 0.1$ has yielded more accurate results in the long time computations. Similar behaviors have also been observed in the proposed third order BDF-type scheme~\eqref{scheme-BDF3-0}: The theoretical analysis indicates a minimum value of $A_0 = \frac{9}{32} (\frac{49}{16})^4 \varepsilon^{-2}$ to obtain a modified energy stability for the proposed third order scheme, while the numerical simulation with $A_0 = O (1)$ has yielded more accurate results in the long time computations. In the numerical results presented in Section~\ref{sec:numerical results}, an $O (1)$ artificial regularization coefficient will be utilized.
\end{rem}

\begin{rem}
There have been a few recent works of the second order BDF schemes for certain gradient flow models, such as Allen-Cahn and Cahn-Hilliard~\cite{cheng2019a, Liao2020b, yan18}, slope-selection thin film equation~\cite{fengW18b}, square phase field crystal~\cite{cheng2019d}, in which the energy stability was theoretically established. Similarly, a Douglas-Dupont type regularization has to be included in the numerical scheme, while a careful analysis only requires the corresponding parameter $A$ of an order $A=O (1)$.  The primary reason for the difference in the order of the artificial parameter $A$ between the second and third order numerical schemes is based on the following fact: for the second order scheme, the artificial regularization, with magnitude $O(\dt^2)$, and the temporal discretization terms are sufficient to theoretically justify the energy stability; while for the third order scheme, these two terms are not sufficient to ensure the numerical stability, since the artificial regularization term has to be in the order of $O(\dt^3)$ to keep the third order temporal accuracy. 
\end{rem}



\begin{rem}
The stability and convergence estimates for the temporally third order accurate numerical schemes have been reported for fluid models, such as viscous Burgers' equation~\cite{gottlieb12b}, incompressible Navier-Stokes equation~\cite{cheng16b}, harmonic mapping flow~\cite{Xia2020a}, etc.

For the gradient models, the only existing works to address the energy stability for a third order numerical scheme could be found in~\cite{chen20b, cheng2019c, Gong2020, xu06}. In this article, we provide an alternate third order numerical approach for the NSS equation, for which both the energy stability and optimal rate convergence estimate could be theoretically justified.
\end{rem}

\begin{rem}
The second Dahlquist barrier~\cite{Dahlquist1963} states that, ``There are no explicit A-stable and linear multistep methods.  The implicit ones have order of convergence at most 2." The proposed third order accurate scheme~\eqref{scheme-BDF3-0} is based on the BDF3 temporal approximation for the surface diffusion part, while it is well-known that the BDF3 method is not A-stable. On the other hand, although the BDF3 method is not A-stable, an A-stability is not necessary to preserve an unconditional energy stability for the NSS equation~\eqref{equation-NSS}. In more details, the domain of the BDF3 contains a very large portion of the left half complex plane. In particular, the whole negative real axis (where all the eigenvalues associated with the surface diffusion operator are located) is contained inside the stability domain. Furthermore, the nonlinear chemical potential parts have automatically bounded higher order derivatives, and this subtle fact leads to an unconditional energy stability, with the help of extra dissipations coming from the surface diffusion and temporal differentiation terms. More importantly, an addition of artificial regularization in the numerical scheme~\eqref{scheme-BDF3-0} makes it not equivalent to the original BDF3 algorithm, and this term results in better stability property than the standard BDF3 method. All these facts yield the desired unconditional energy stability of the proposed third order scheme~\eqref{scheme-BDF3-0}, and no CFL-like condition is needed for the time step size.
\end{rem}

\section{The convergence analysis for the third order BDF scheme}
\label{sec-convergence}

The global existence of weak solution, strong solution and smooth solution for the NSS equation (\ref{equation-NSS}) has been established in~\cite{libo03}. In more details, a global-in-time estimate of $L^\infty (0,T; H^m) \cap L^2 (0,T; H^{m+2})$ for the phase variable was proved, assuming initial data in $H^m$, for any $m \ge 2$. Therefore, with an initial data with sufficient regularity, we could assume that the exact solution has regularity of class $\mathcal{R}$:
		\begin{equation}
			u_{e} \in \mathcal{R} := H^4 (0,T; C^0) \cap H^1 (0,T; H^4) \cap H^3 (0,T; H^{m+2}) \cap L^\infty (0,T; H^{m+4}).
			\label{assumption:regularity.1}
		\end{equation}


Define $U_N (\, \cdot \, ,t) := {\cal P}_N u_e (\, \cdot \, ,t)$, the (spatial) Fourier projection of the exact solution into ${\cal B}^K$, the space of trigonometric polynomials of degree to and including  $K$.  The following projection approximation is standard: if $u_e \in L^\infty(0,T;H^\ell_{\rm per}(\Omega))$, for some $\ell\in\mathbb{N}$, we have
	\begin{equation}
\| U_N - u_e \|_{L^\infty(0,T;H^k)}
   \le C h^{\ell-k} \| u_e \|_{L^\infty(0,T;H^\ell)},  \quad \forall \ 0 \le k \le \ell .
	\label{projection-est-0}
	\end{equation}
By $U_N^m$ we denote $U_N(\, \cdot \, , t^m)$, with $t^m = m\cdot \dt$. Since $U_N \in {\cal P}_K$, the mass conservative property is available at the discrete level:
	\begin{equation}
\overline{U_N^m} = \frac{1}{|\Omega|}\int_\Omega \, U_N ( \cdot, t_m) \, d {\bf x} = \frac{1}{|\Omega|}\int_\Omega \, U_N ( \cdot, t_{m-1}) \, d {\bf x} = \overline{U_N^{m-1}} ,  \quad \forall \ m \in\mathbb{N}.
	\label{mass conserv-1}
	\end{equation}
On the other hand, the solution of the numerical scheme~\eqref{scheme-BDF3-0} is also mass conservative at the discrete level:
	\begin{equation}
\overline{u^m} = \overline{u^{m-1}} ,  \quad \forall \ m \in \mathbb{N} .
	\label{mass conserv-2}
	\end{equation}
Meanwhile, we denote $U^m$ as the interpolation values of $U_N$ at discrete grid points at time instant $t^m$: $U_{i,j}^m :=  U_N (x_i, y_j, t^m)$.
As indicated before, we use the mass conservative projection for the initial data:  
	\begin{equation}
u^0_{i,j} = U^0_{i,j} := U_N (x_i, y_j, t=0) .
	\label{initial data-0}
	\end{equation}	
The error grid function is defined as
	\begin{equation}
e^m := U^m - u^m ,  \quad \forall \ m \in \left\{ 0 ,1 , 2, 3, \cdots \right\} .
	\label{error function-1}
	\end{equation}
Therefore, it follows that  $\overline{e^m} =0$, for any $m \in \left\{ 0 ,1 , 2, 3, \cdots \right\}$. 

For the proposed third order BDF-type scheme~\eqref{scheme-BDF3-0}, the convergence result is stated below.

    \begin{thm}
    	\label{thm:convergence}
    	Given initial data $U_N^{0}$, $U_N^{-1}$, $U_N^{-2} \in C_{\rm per}^{m+4} (\overline{\Omega})$, with periodic boundary conditions, suppose the unique solution for the NSS equation (\ref{equation-NSS}) is of regularity class $\mathcal{R}$. Then, provided $\dt$ and $h$ are sufficiently small, 
for all positive integers $\ell$, such that $\dt \cdot \ell \le T$, we have
    	\begin{eqnarray}
       &&	
    	\| e^\ell \|_2 +  \Bigl( \varepsilon^2 \dt   \sum_{m=1}^{\ell} \| \Delta_N e^m \|_2^2 \Bigr)^{1/2} \le C ( \dt^3 + h^m ),   \label{convergence-0-2}
    	\end{eqnarray}
    	where $C>0$ is independent of $\dt$ and $h$.
    \end{thm}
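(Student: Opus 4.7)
The proof follows the standard consistency plus stability pattern, adapted to the BDF3 temporal stencil and the globally Lipschitz nonlinear flux $\mathcal{F}(u) := \nabla \cdot \bigl( \nabla u/(1+|\nabla u|^2) \bigr)$. First I would set up the consistency analysis: substituting the Fourier projection $U_N^m = \mathcal{P}_N u_e(\cdot, t^m)$ into the scheme (\ref{scheme-BDF3-0}) and Taylor-expanding about $t^{n+1}$, the BDF3 difference quotient is a third order approximation to $\partial_t u_e(t^{n+1})$, the three-step explicit extrapolation of $\mathcal{F}$ is also $O(\dt^3)$ accurate, and the Douglas--Dupont perturbation $A \dt^2 \Delta_N^2 (U_N^{n+1} - U_N^n)$ is $O(\dt^3)$ since $\|\Delta_N^2 (U_N^{n+1} - U_N^n)\|_2 = O(\dt)$ under the regularity class $\mathcal{R}$. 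Combined with the spectral projection bound (\ref{projection-est-0}), the truncation residual $\tau^{n+1}$ in the error equation for $e^m = U^m - u^m$ satisfies $\|\tau^{n+1}\|_2 \le C(\dt^3 + h^m)$.

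The stability step is to take the discrete inner product of the error equation with the test function $e^{n+1} + (e^{n+1} - e^n) = 2 e^{n+1} - e^n$, as advertised in the introduction. The BDF3 stencil paired against this test function is handled by a Nevanlinna--Odeh style G-stability identity, producing the difference of a quadratic form in consecutive errors plus a coercive $\dt^{-1} \|e^{n+1}\|_2^2$ piece. The biharmonic inner product $\varepsilon^2 \langle \Delta_N^2 e^{n+1}, 2 e^{n+1} - e^n \rangle$ telescopes via (\ref{spectral-coll-inner product-3}) into a leading $\varepsilon^2 \|\Delta_N e^{n+1}\|_2^2$ plus differences of squared $\|\Delta_N e^k\|_2$ terms, and the regularization $A \dt^2 \Delta_N^2 (e^{n+1} - e^n)$ contributes a non-negative $A \dt^2 \|\Delta_N (e^{n+1} - e^n)\|_2^2$.

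The main technical obstacle, and the reason the introduction flags a ``linearized estimate,'' is the nonlinear flux error. Writing it as a combination of $\beta(\nabla_N U^k) - \beta(\nabla_N u^k)$ for $k = n, n-1, n-2$ with $\beta(\v) = \v/(1+|\v|^2)$, Lemma~\ref{lem:vector inequality} supplies the pointwise Lipschitz bound $|\beta(\nabla_N U^k) - \beta(\nabla_N u^k)| \le |\nabla_N e^k|$, valid \emph{without} any a priori bound on $\nabla_N u^k$. After moving a discrete divergence onto the test function via summation by parts, this yields terms of the form $C \|\nabla_N e^k\|_2 \cdot \|\nabla_N (2 e^{n+1} - e^n)\|_2$. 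Using the interpolation $\|\nabla_N f\|_2^2 \le \|f\|_2 \|\Delta_N f\|_2$ and Young's inequality with a small coefficient chosen relative to $\varepsilon^2$, the $\|\Delta_N (\cdot)\|_2$ factors are absorbed into the coercive surface diffusion terms, leaving $O(\|e^k\|_2^2)$ contributions at earlier time steps. Collecting everything with $|\langle \tau^{n+1}, 2 e^{n+1} - e^n \rangle|$ estimated by Cauchy--Schwarz gives a recursion to which the discrete Gronwall lemma applies, yielding the desired $\ell^\infty(0,T;\ell^2) \cap \ell^2(0,T;H_h^2)$ bound. Starting errors at the initial few time steps are of order $O(\dt^3 + h^m)$ thanks to the high-order Runge--Kutta initialization noted in the remark following (\ref{scheme-BDF3-0}), which completes the argument.
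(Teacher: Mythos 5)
Your proposal follows essentially the same route as the paper: consistency of the projected solution with $\| \tau^n \|_2 \le C(\dt^3 + h^m)$, testing the error equation with $2e^{n+1} - e^n$, the telescoping identity of Lemma~\ref{lem:3rd order BDF} for the BDF3 stencil, the pointwise Lipschitz bound of Lemma~\ref{lem:vector inequality} for the nonlinear flux error, absorption of the $\| \nabla_N e^k \|_2^2$ terms via $\| \nabla_N f \|_2^2 \le \| f \|_2 \, \| \Delta_N f \|_2$ and Young's inequality into the coercive $\varepsilon^2 \| \Delta_N e^{n+1} \|_2^2$ contribution, followed by a discrete Gronwall argument. The only minor inaccuracy is your claim that the BDF3 pairing produces a coercive $\dt^{-1} \| e^{n+1} \|_2^2$ piece: the paper's telescope formula yields only differences of quadratic forms plus a non-negative remainder, but this coercivity is never needed, since $\| e^{n+1} \|_2^2$ is controlled through the telescoped quantity itself (as in \eqref{convergence-7-2}).
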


\subsection{The error evolutionary equation}

  For the Fourier projection solution $U_N$ and its interpolation $U$, a careful consistency analysis implies that
\begin{eqnarray}
  &&
  \frac{\frac{11}{6} U^{n+1} - 3 U^n + \frac32 U^{n-1} - \frac13 U^{n-2}}{\dt}
  +\varepsilon^2 \Delta_N^2 U^{n+1} +A \dt^2 \Delta_N^2( U^{n+1} - U^n) \nonumber
\\
  &&
 = - \nabla_N \cdot \Bigl( 3 \frac{\nabla_N U^n}{1+ |\nabla_N U^n|^2}
  -3 \frac{\nabla_N U^{n-1}}{1+ |\nabla_N U^{n-1}|^2}
  + \frac{\nabla_N U^{n-2}}{1+ |\nabla_N U^{n-2}|^2} \Bigr) + \tau^n ,
  \label{BDF3-consistency-1}
\end{eqnarray}
with $\| \tau^n \|_2 \le C (\dt^3 + h^m)$. In turn, subtracting the numerical scheme \eqref{scheme-BDF3-0} from the consistency estimate \eqref{BDF3-consistency-1} yields
\begin{eqnarray}
  &&
  \frac{\frac{11}{6} e^{n+1} - 3 e^n + \frac32 e^{n-1} - \frac13 e^{n-2}}{\dt}
  +\varepsilon^2 \Delta_N^2 e^{n+1} +A \dt^2 \Delta_N^2( e^{n+1} - e^n) \nonumber
\\
  &=&
   - \nabla_N \cdot \Bigl( 3 \frac{\nabla_N U^n}{1+ |\nabla_N U^n|^2}
  - 3 \frac{\nabla_N u^n}{1+ |\nabla_N u^n|^2}
  -3 \frac{\nabla_N U^{n-1}}{1+ |\nabla_N U^{n-1}|^2}
  +3 \frac{\nabla_N u^{n-1}}{1+ |\nabla_N u^{n-1}|^2}    \nonumber
\\
  &&
  + \frac{\nabla_N U^{n-2}}{1+ |\nabla_N U^{n-2}|^2}
  - \frac{\nabla_N u^{n-2}}{1+ |\nabla_N u^{n-2}|^2}  \Bigr) + \tau^n .
  \label{BDF3-consistency-2}
\end{eqnarray}

\subsection{The $\ell^\infty (0,T; \ell^2) \cap \ell^2 (0,T; H_h^2)$ error estimate}

Before the proof of the convergence result, we present the telescope formula in \cite{JLiu2013} for the third order BDF temporal discretization operator in the following lemma; also see~\cite{yao17} for the related discussion. 
\begin{lem} \label{lem:3rd order BDF}
For the third order BDF temporal discretization operator, there exists $\alpha_i$, $i=1,\cdots,10$, $\alpha_1 \ne 0$, such that
\begin{eqnarray} \label{BDF-3-est-0}
&& \langle \frac{11}{6} e^{n+1} - 3 e^n + \frac32 e^{n-1} - \frac13 e^{n-2} ,
 2 e^{n+1} - e^n \rangle \\
&=&  \| \alpha_1 e^{n+1} \|_2^2 - \| \alpha_1 e^n \|_2^2
 + \| \alpha_2 e^{n+1} + \alpha_3 e^n \|_2^2
 - \| \alpha_2 e^n + \alpha_3 e^{n-1} \|_2 ^2 \nonumber
\\
  &&
  + \| \alpha_4 e^{n+1} + \alpha_5 e^n + \alpha_6 e^{n-1} \|_2^2
 - \| \alpha_4 e^n + \alpha_5 e^{n-1} + \alpha_6 e^{n-2} \|_2^2 \nonumber
\\
  &&+ \| \alpha_7 e^{n+1} + \alpha_8 e^n + \alpha_9 e^{n-1}
   + \alpha_{10} e^{n-2} \|_2^2 . \nonumber
\end{eqnarray}
\end{lem}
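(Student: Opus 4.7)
The identity \eqref{BDF-3-est-0} is purely algebraic: both sides are quadratic forms in the four grid functions $e^{n+1}, e^n, e^{n-1}, e^{n-2}$, with only ten independent monomials, namely the four squared norms $\|e^{n+1-i}\|_2^2$, $i=0,1,2,3$, and the six cross inner products $\langle e^{n+1-i}, e^{n+1-j}\rangle$, $0\le i<j\le 3$. My plan is to reduce the identity to matching these ten scalar coefficients. First I would directly expand the left hand side as a bilinear form in the four error functions and symmetrize, obtaining in particular the contributions $\frac{11}{3}\|e^{n+1}\|_2^2$ and $3\|e^n\|_2^2$ on the diagonal together with mixed coefficients such as $-\frac{47}{6}\langle e^{n+1}, e^n\rangle$, $3\langle e^{n+1}, e^{n-1}\rangle$, $-\frac{2}{3}\langle e^{n+1}, e^{n-2}\rangle$, and so on, while the coefficients of $\|e^{n-1}\|_2^2$ and $\|e^{n-2}\|_2^2$ vanish identically.

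Next I would expand each squared-norm block on the right hand side. The two differences $\|\alpha_2 e^{n+1}+\alpha_3 e^n\|_2^2 - \|\alpha_2 e^n+\alpha_3 e^{n-1}\|_2^2$ and $\|\alpha_4 e^{n+1}+\alpha_5 e^n+\alpha_6 e^{n-1}\|_2^2 - \|\alpha_4 e^n+\alpha_5 e^{n-1}+\alpha_6 e^{n-2}\|_2^2$ supply the telescoping structure that, together with $\|\alpha_1 e^{n+1}\|_2^2 - \|\alpha_1 e^n\|_2^2$, must cancel the spurious $\|e^{n-1}\|_2^2$ and $\|e^{n-2}\|_2^2$ pieces; the residual $\|\alpha_7 e^{n+1}+\alpha_8 e^n+\alpha_9 e^{n-1}+\alpha_{10} e^{n-2}\|_2^2$ supplies a single rank-one positive semidefinite correction. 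Matching the ten coefficients yields a triangular polynomial system for $\alpha_1,\dots,\alpha_{10}$, which I would solve from the ``bottom up'': the $\|e^{n-2}\|_2^2$ equation forces $\alpha_{10}^2=\alpha_6^2$, then $\langle e^{n-1}, e^{n-2}\rangle$ gives $\alpha_9\alpha_{10}=\alpha_5\alpha_6$, and so on, cascading up to $\alpha_1^2+\alpha_2^2+\alpha_4^2+\alpha_7^2=\frac{11}{3}$ as the final constraint.

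The main obstacle is verifying that this system admits a solution in the \emph{reals}, so that the last block is a genuine non-negative square rather than a disguised indefinite form; equivalently, one must show that the symmetric $4\times 4$ coefficient matrix of the LHS minus the telescoping increment admits a Cholesky-type factorization with the prescribed sparsity pattern. This is precisely the $G$-stability of the BDF3 scheme paired with the multiplier $2-\zeta^{-1}$ corresponding to the test sequence $2e^{n+1}-e^n$. I would either exhibit the factorization by explicit computation of the principal minors, or invoke the Nevanlinna--Odeh multiplier framework used in~\cite{JLiu2013}. Once real $\alpha_i$ with $\alpha_1\neq 0$ are produced, the identity holds by construction. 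The explicit numerical values of $\alpha_1,\dots,\alpha_{10}$ will not be needed in the subsequent convergence analysis; only their reality (non-negativity of the three telescoping blocks and of the residual) and the condition $\alpha_1\ne 0$, which guarantees a genuine $\|e^{n+1}\|_2^2 - \|e^n\|_2^2$ contribution to control the error in the $\ell^\infty(0,T;\ell^2)$ norm.
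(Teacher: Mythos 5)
The paper itself does not prove this lemma: the identity is quoted verbatim as the telescope formula of~\cite{JLiu2013} (with a pointer to~\cite{yao17}), so the paper's ``proof'' is a citation. Your plan is the natural self-contained alternative, and your bookkeeping is correct: expanding the left side gives $\frac{11}{3}\|e^{n+1}\|_2^2 + 3\|e^n\|_2^2 - \frac{47}{6}\langle e^{n+1},e^n\rangle + 3\langle e^{n+1},e^{n-1}\rangle - \frac{2}{3}\langle e^{n+1},e^{n-2}\rangle - \frac{3}{2}\langle e^n,e^{n-1}\rangle + \frac{1}{3}\langle e^n,e^{n-2}\rangle$, with the $\|e^{n-1}\|_2^2$, $\|e^{n-2}\|_2^2$ and $\langle e^{n-1},e^{n-2}\rangle$ entries vanishing, and matching the ten monomial coefficients against the prescribed telescoping-plus-remainder structure is indeed equivalent to a reverse-Cholesky factorization of a $3\times 3$ positive semidefinite matrix together with a rank-one residual, exactly as you describe.

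The genuine gap is that the decisive step --- the existence of a \emph{real} solution of the resulting ten-equation system, equivalently the positive semidefiniteness needed for the factorization --- is only announced (``compute the principal minors'' or ``invoke the framework of~\cite{JLiu2013}''), not carried out, and that step is the entire content of the lemma; everything before it is routine expansion. If you close it by citing~\cite{JLiu2013} you have simply reproduced the paper's route. If you want the verification to be self-contained, the clean way to finish is through the generating symbol: substituting $e^{m}=\zeta^{m}$ with $\zeta=e^{i\theta}$, the relevant quantity is the real part of $\bigl(\frac{11}{6}-3\zeta^{-1}+\frac32\zeta^{-2}-\frac13\zeta^{-3}\bigr)\overline{\bigl(2-\zeta^{-1}\bigr)}$, which equals $\frac13\bigl(-8c^3+20c^2-22c+10\bigr)=\frac23\,(1-c)\,\bigl((2c-\tfrac32)^2+\tfrac{11}{4}\bigr)\ge 0$ with $c=\cos\theta$, vanishing only at $c=1$; a Fej\'er--Riesz factorization of this nonnegative trigonometric polynomial, following Dahlquist's G-stability construction, then produces real $\alpha_1,\dots,\alpha_{10}$ with $\alpha_1\neq 0$ and yields exactly the claimed identity (one telescoped quadratic form plus a single residual square). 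This positivity check is what guarantees your ``bottom-up'' system is solvable over the reals, i.e.\ that the last block is a genuine square rather than a disguised indefinite form; without it, the proposal asserts rather than proves the lemma. A side remark: the Nevanlinna--Odeh multiplier machinery gives an inequality of this flavor but is not what~\cite{JLiu2013} does, and it is not needed once the symbol computation above is in hand.
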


Now we proceed into the convergence estimate. Taking a discrete $\ell^2$ inner product with~\eqref{BDF3-consistency-2} by $2 e^{n+1} - 2 e^n$ gives
\begin{eqnarray}
  &&
  \frac{1}{\dt} \langle \frac{11}{6} e^{n+1} - 3 e^n + \frac32 e^{n-1} - \frac13 e^{n-2} ,
 2e^{n+1} - e^n \rangle   \nonumber
\\
  &&
  + \varepsilon^2 \langle \Delta_N e^{n+1} , \Delta_N ( 2 e^{n+1} - e^n ) \rangle
  +A \dt^2 \langle \Delta_N ( e^{n+1} - e^n) , \Delta_N ( 2 e^{n+1} - e^n ) \rangle \nonumber
\\
  &=&
   \sum_{j=0}^2 \gamma^{(j)}
   \Big\langle \frac{\nabla_N U^{n-j}}{1+ |\nabla_N U^{n-j}|^2}
  - \frac{\nabla_N u^{n-j}}{1+ |\nabla_N u^{n-j}|^2}  ,
   \nabla_N ( 2 e^{n+1} - e^n ) \Big\rangle
   + \langle \tau^n , 2 e^{n+1} - e^n \rangle  ,
  \label{convergence-1}
\end{eqnarray}
with $\gamma^{(0)} =3$, $\gamma^{(1)} =-3$, $\gamma^{(2)} =1$. Notice that the summation by parts formula has been repeatedly applied in the derivation. The local truncation error term could also be bounded in a straightforward way:
\begin{eqnarray}
    \langle \tau^n , 2 e^{n+1} - e^n \rangle
    \le \| \tau^n \|_2^2 + \| e^{n+1} \|_2^2 + \frac12 ( \| \tau^n \|_2^2 + \| e^n \|_2^2 )
    =  \frac32 \| \tau^n \|_2^2 + \| e^{n+1} \|_2^2 + \frac12 \| e^n \|_2^2 .
   \label{convergence-2}
\end{eqnarray}
The surface diffusion and the Douglas-Dupont regularization terms could be analyzed as follows:
\begin{eqnarray}
    \langle \Delta_N e^{n+1} , \Delta_N ( 2 e^{n+1} - e^n ) \rangle
    &=&  \| \Delta_N e^{n+1} \|_2^2
    + \langle \Delta_N e^{n+1} , \Delta_N ( e^{n+1} - e^n ) \rangle  \nonumber
\\
  &\ge&
  \| \Delta_N e^{n+1} \|_2^2
  + \frac12 ( \| \Delta_N e^{n+1} \|_2^2 - \| \Delta_N e^n \|_2^2  ) ,
  \label{convergence-3}
\\
  \langle \Delta_N ( e^{n+1} - e^n ) , \Delta_N ( 2 e^{n+1} - e^n ) \rangle
    &=&  \| \Delta_N ( e^{n+1} - e^n ) \|_2^2
    + \langle \Delta_N e^{n+1} , \Delta_N ( e^{n+1} - e^n ) \rangle  \nonumber
\\
  &\ge&
  \| \Delta_N ( e^{n+1} - e^n ) \|_2^2
  + \frac12 ( \| \Delta_N e^{n+1} \|_2^2 - \| \Delta_N e^n \|_2^2  ) .
  \label{convergence-4}
\end{eqnarray}

For the nonlinear error term on the right hand side of~\eqref{convergence-1}, we focus on the time instant $t^n$. By making use of inequality~\eqref{vector inequality-0} (in Lemma~\ref{lem:vector inequality}), we see that
\begin{eqnarray}
   \Bigl| \frac{\nabla_N U^n}{1+ |\nabla_N U^n|^2}
  - \frac{\nabla_N u^n}{1+ |\nabla_N u^n|^2}  \Bigr|
  \le | \nabla_N e^n |  ,  \quad \mbox{at a point-wise level} .
  \label{convergence-5-1}
\end{eqnarray}
This in turn leads to
\begin{eqnarray}
   \gamma^{(0)}
   \Big\langle \frac{\nabla_N U^n}{1+ |\nabla_N U^n|^2}
  - \frac{\nabla_N u^n}{1+ |\nabla_N u^n|^2}  ,
   \nabla_N ( 2 e^{n+1} - e^n ) \Big\rangle
   \le 3 \| \nabla_N e^n \|_2 \cdot \|  \nabla_N ( 2 e^{n+1} - e^n ) \|_2 .
   \label{convergence-5-2}
\end{eqnarray}
Similarly, the following inequalities are available:
\begin{eqnarray}
  &&
   \gamma^{(1)}
   \Big\langle \frac{\nabla_N U^{n-1}}{1+ |\nabla_N U^{n-1}|^2}
  - \frac{\nabla_N u^{n-1}}{1+ |\nabla_N u^{n-1}|^2}  ,
   \nabla_N ( 2 e^{n+1} - e^n ) \Big\rangle   \nonumber
\\
  &&  \qquad
   \le 3 \| \nabla_N e^{n-1} \|_2 \cdot \|  \nabla_N ( 2 e^{n+1} - e^n ) \|_2 ,
   \label{convergence-5-3}
\\
  &&
   \gamma^{(2)}
   \Big\langle \frac{\nabla_N U^{n-2}}{1+ |\nabla_N U^{n-2}|^2}
  - \frac{\nabla_N u^{n-2}}{1+ |\nabla_N u^{n-2}|^2}  ,
   \nabla_N ( 2 e^{n+1} - e^n ) \Big\rangle   \nonumber
\\
  &&  \qquad
   \le \| \nabla_N e^{n-2} \|_2 \cdot \|  \nabla_N ( 2 e^{n+1} - e^n ) \|_2 .
   \label{convergence-5-4}
\end{eqnarray}
Then we arrive at
\begin{eqnarray}
   &&
   \sum_{j=0}^2 \gamma^{(j)}
   \Big\langle \frac{\nabla_N U^{n-j}}{1+ |\nabla_N U^{n-j}|^2}
  - \frac{\nabla_N u^{n-j}}{1+ |\nabla_N u^{n-j}|^2}  ,
   \nabla_N ( 2 e^{n+1} - e^n ) \Big\rangle   \nonumber
\\
  &\le&
  ( 3 \| \nabla_N e^n \|_2 + 3 \| \nabla_N e^{n-1} \|_2 + \| \nabla_N e^{n-2} \|_2 )
  \cdot \|  \nabla_N ( 2 e^{n+1} - e^n ) \|_2  \nonumber
\\
  &\le&
   7 \|  \nabla_N e^{n+1} \|_2^2 + 8 \| \nabla_N e^n \|_2^2
   + \frac92 \| \nabla_N e^{n-1} \|_2^2 + \frac32 \| \nabla_N e^{n-2} \|_2^2 ,
   \label{convergence-5-5}
\end{eqnarray}
with repeated application of Cauchy inequality at the last step.

  Therefore, a substitution of~\eqref{BDF-3-est-0}, \eqref{convergence-2}-\eqref{convergence-4} and \eqref{convergence-5-5} into \eqref{convergence-1} leads to
\begin{eqnarray}
  &&
    \frac{1}{\dt} ( F^{n+1} - F^n  ) + \varepsilon^2 \| \Delta_N e^{n+1} \|_2^2
  + \frac12 ( \varepsilon^2 + A \dt^2 )
  ( \| \Delta_N e^{n+1} \|_2^2 - \| \Delta_N e^n \|_2^2  )  \nonumber
\\
  &\le&
   7 \|  \nabla_N e^{n+1} \|_2^2 + 8 \| \nabla_N e^n \|_2^2
   + \frac92 \| \nabla_N e^{n-1} \|_2^2 + \frac32 \| \nabla_N e^{n-2} \|_2^2
    + \frac32 \| \tau^n \|_2^2 + \| e^{n+1} \|_2^2 + \frac12 \| e^n \|_2^2  ,
   \label{convergence-6-1}
\\
  &&  \mbox{with} \, \, \,
      F^{n+1} = \| \alpha_1 e^{n+1} \|_2^2
+ \| \alpha_2 e^{n+1} + \alpha_3 e^n \|_2^2
  + \| \alpha_4 e^{n+1} + \alpha_5 e^n + \alpha_6 e^{n-1} \|_2^2 .  \label{defi-F}
\end{eqnarray}
Meanwhile, for the error gradient term $\|  \nabla_N e^{n+1} \|_2^2$, the following estimate could be derived:
\begin{eqnarray}
  7 \|  \nabla_N e^{n+1} \|_2^2 &=& - 7  \langle e^{n+1},  \Delta_N e^{n+1} \rangle
  \le  7  \| e^{n+1} \|_2 \cdot \|  \Delta_N e^{n+1} \|_2  \nonumber
\\
  &\le&
      \frac{147}{2} \varepsilon^{-2}  \| e^{n+1} \|_2^2
  + \frac{\varepsilon^2}{6} \|  \Delta_N e^{n+1} \|_2^2  .
  \label{convergence-6-2}
\end{eqnarray}
The error gradient terms at three other time step instants could be bounded in a similar way:
\begin{eqnarray}
  8 \|  \nabla_N e^n \|_2^2   &\le&
  96 \varepsilon^{-2}  \| e^n \|_2^2  + \frac{\varepsilon^2}{6} \|  \Delta_N e^n \|_2^2  ,
  \label{convergence-6-3}
\\
  \frac92 \|  \nabla_N e^{n-1} \|_2^2   &\le&
  \frac{243}{8} \varepsilon^{-2}  \| e^{n-1} \|_2^2
  + \frac{\varepsilon^2}{6} \|  \Delta_N e^{n-1} \|_2^2  ,
  \label{convergence-6-4}
\\
  \frac32 \|  \nabla_N e^{n-2} \|_2^2   &\le&
  \frac{27}{8} \varepsilon^{-2}  \| e^{n-2} \|_2^2
  + \frac{\varepsilon^2}{6} \|  \Delta_N e^{n-2} \|_2^2  .
  \label{convergence-6-5}
\end{eqnarray}
Going back~\eqref{convergence-6-1}, we arrive at
\begin{eqnarray}
  &&
    \frac{1}{\dt} ( F^{n+1} - F^n  ) + \frac56 \varepsilon^2 \| \Delta_N e^{n+1} \|_2^2
  + \frac12 ( \varepsilon^2 + A \dt^2 )
  ( \| \Delta_N e^{n+1} \|_2^2 - \| \Delta_N e^n \|_2^2  )  \nonumber
\\
  &\le&
  ( \frac{147}{2} \varepsilon^{-2}  + 1 ) \| e^{n+1} \|_2^2
  + ( 96 \varepsilon^{-2}  + \frac12 ) \| e^n \|_2^2
  + \frac{243}{8} \varepsilon^{-2}  \| e^{n-1} \|_2^2
  + \frac{27}{8} \varepsilon^{-2}  \| e^{n-2} \|_2^2   \nonumber
\\
  &&
  + \frac{\varepsilon^2}{6} ( \|  \Delta_N e^n \|_2^2 + \|  \Delta_N e^{n-1} \|_2^2
  + \|  \Delta_N e^{n-2} \|_2^2 )    + \frac32 \| \tau^n \|_2^2  .
   \label{convergence-6-6}
\end{eqnarray}
By introducing a modified quantity
\begin{eqnarray}
   \widetilde{F}^{n+1} := F^{n+1}
  + \frac12 ( \varepsilon^2 + A \dt^2 ) \dt \| \Delta_N e^{n+1} \|_2^2 ,
   \label{convergence-7-1}
\end{eqnarray}
and making use of the following obvious fact:
\begin{eqnarray}
  \| e^k \|_2^2 \le \frac{1}{\alpha_1^2} \widetilde{F}^k ,  \quad
  \forall k \ge 0 , \label{convergence-7-2}
\end{eqnarray}
we obtain the following estimate
\begin{eqnarray}
  &&
    \frac{1}{\dt} ( \widetilde{F}^{n+1} - \widetilde{F}^n  )
    + \frac56 \varepsilon^2 \| \Delta_N e^{n+1} \|_2^2
    - \frac{\varepsilon^2}{6} ( \|  \Delta_N e^n \|_2^2 + \|  \Delta_N e^{n-1} \|_2^2
  + \|  \Delta_N e^{n-2} \|_2^2 )     \nonumber
\\
  &\le&
  ( 96 \varepsilon^{-2}  + 1 ) \alpha_1^{-2} ( \widetilde{F}^{n+1}
  + \widetilde{F}^n  + \widetilde{F}^{n-1} + \widetilde{F}^{n-2} )
   + \frac32 \| \tau^n \|_2^2  .
   \label{convergence-7-3}
\end{eqnarray}
In turn, an application of discrete Gronwall inequality results in the convergence estimate:
\begin{eqnarray}
   F^{n+1} + \Bigl( \frac13 \varepsilon^2 \dt \sum_{m=1}^{n+1} \| \Delta_N e^m \|_2^2 \Bigr)^{1/2} \le \hat{C} ( \dt^3 + h^m)^2 .
   \label{convergence-7-4}
\end{eqnarray}
Furthermore, its combination with definition~\eqref{defi-F} (for $F^{n+1}$) indicates the desired result~\eqref{convergence-0-2}. This completes the proof of Theorem~\ref{thm:convergence}.

\begin{rem}
The proposed numerical scheme~\eqref{scheme-BDF3-0} is fully discrete, instead of method of line; both the modified BDF3 temporal method and the Fourier pseudo-spectral spatial approximation have been involved in the scheme. In turn, the truncation error $\tau^n$ (appearing in \eqref{BDF3-consistency-1}) implies both the third order temporal accuracy and the spectral spatial accuracy. In the stability and convergence estimates, it is noticed that the summation by parts formula~\eqref{spectral-coll-inner product-3} has played an important role in the theoretical analysis for the fully discrete scheme.
\end{rem}

\begin{rem}
The inner product associated with the truncation error is analyzed in~\eqref{convergence-2}, with $\| \tau^n \|_2 = O (\dt^3 + h^m)$. On the other hand, this accuracy order does not affect the final convergence rate in time, since the numerical error evolutionary equation~\eqref{BDF3-consistency-2} could be rewritten as
\begin{eqnarray}
  &&
  \frac{11}{6} e^{n+1} - 3 e^n + \frac32 e^{n-1} - \frac13 e^{n-2}
  + \varepsilon^2 \dt \Delta_N^2 e^{n+1} +A \dt^3 \Delta_N^2( e^{n+1} - e^n) \nonumber
\\
  &=&
   - \dt \nabla_N \cdot \Bigl( 3 \frac{\nabla_N U^n}{1+ |\nabla_N U^n|^2}
  - 3 \frac{\nabla_N u^n}{1+ |\nabla_N u^n|^2}
  -3 \frac{\nabla_N U^{n-1}}{1+ |\nabla_N U^{n-1}|^2}
  +3 \frac{\nabla_N u^{n-1}}{1+ |\nabla_N u^{n-1}|^2}    \nonumber
\\
  &&
  + \frac{\nabla_N U^{n-2}}{1+ |\nabla_N U^{n-2}|^2}
  - \frac{\nabla_N u^{n-2}}{1+ |\nabla_N u^{n-2}|^2}  \Bigr)
  + \dt \tau^n ,
  \label{BDF3-consistency-2-rewritten}
\end{eqnarray}
so that the last term (corresponding to the truncation error) is of order $O (\dt^4 + \dt h^m)$. Finally, with an application of discrete Gronwall inequality to the preliminary estimate~\eqref{convergence-7-3}, we are able to derive the desired convergence result~\eqref{convergence-7-4}, with third order temporal accuracy and spectral order spatial accuracy.
\end{rem}

\begin{rem}
For the proposed third order numerical scheme~\eqref{scheme-BDF3-0}, a uniform time step is necessary to establish the energy stability and convergence analyses at a theoretical level, due to its multi-step and long-stencil nature. Meanwhile, non-uniform time step size could also be used in the practical computations, and such a choice does not affect the energy dissipation property in the simulation results; see the detailed descriptions in the next section. In addition, there have been some existing works of theoretical analysis of variable time stepping method for gradient flows, such as~\cite{chen19b} for the BDF2 scheme applied to the Cahn-Hilliard equation. The variable time stepping version of the proposed third order numerical scheme~\eqref{scheme-BDF3-0} will also be investigated in the future works.
\end{rem}

\section{Numerical results} \label{sec:numerical results}

\subsection{Convergence test for the numerical scheme}

In this subsection we perform a numerical accuracy check for the third order accurate BDF-type scheme~\eqref{scheme-BDF3-0}. The computational domain is set to be $\Omega = (0,1)^2$, and the exact profile for the phase variable is set to be
	\begin{equation}
U (x,y,t) = \sin( 2 \pi x) \cos(2 \pi y) \cos( t) .
	\label{AC-1}
	\end{equation}
To make $U$ satisfy the original PDE \eqref{equation-NSS}, we have to add an artificial, time-dependent forcing term. Then the proposed third order BDF-type  scheme~\eqref{scheme-BDF3-0} can be implemented to solve for (\ref{equation-NSS}).

To investigate the accuracy in space, we fix $\dt = 10^{-4}$, 
so that the spatial approximation error dominates the overall numerical error. We compute solutions with grid sizes $N = 64$ to $N=144$ in increments of 8, and we solve up to time $T = 1$. The surface diffusion parameter is taken as $\varepsilon=0.05$, and we set the artificial regularization parameter as $A=1$. The $\ell^2$ and $\ell^\infty$ numerical errors, computed by the proposed numerical schemes~\eqref{scheme-BDF3-0}, are displayed in Figure~\ref{fig0}. The spatial spectral accuracy is apparently observed for the phase variable. Due to the round-off errors, a saturation of spectral accuracy appears with an increasing $N$.

	\begin{figure}
	\begin{center}
\includegraphics[width=3.0in]{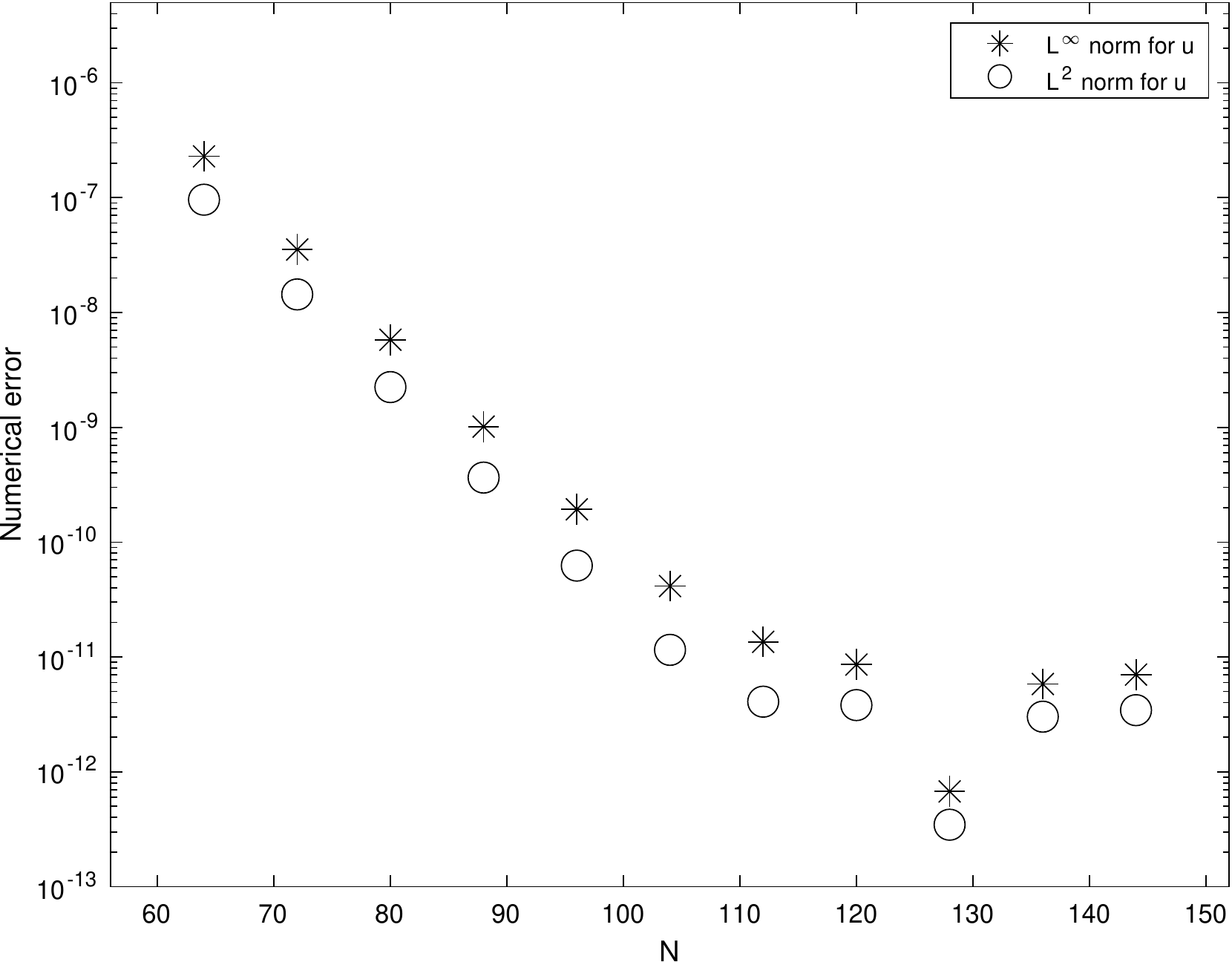}
	\end{center}
\caption{The discrete $\ell^2$ and $\ell^\infty$ numerical errors for the phase variable at $T=1$, plotted versus $N$, the number of spatial grid point, for the fully discrete numerical scheme~\eqref{scheme-BDF3-0}. The time step size is fixed as $\dt = 10^{-4}$. An apparent spatial spectral accuracy is observed.}
	\label{fig0}
	\end{figure}

To demonstrate the accuracy in time, the spatial numerical error has to be negligible. We fix the spatial resolution as $N=192$ (so that $h=\frac{1}{192}$), and set the final time $T=1$. The same surface diffusion and artificial regularization parameters are taken, namely, $\varepsilon=0.05$, $A=1$, respectively. Naturally, a sequence of time step sizes are taken as $\dt=\frac T{N_T}$, with $N_T = 100:100:1000$.  The expected temporal numerical accuracy assumption $e=C \dt^k$ indicates that $\ln |e|=\ln (CT^k) - k \ln N_T$, so that we plot $\ln |e|$ vs. $\ln N_T$ to demonstrate the temporal convergence order. The fitted line displayed in Figure~\ref{fig1} shows an approximate slope of -3, which in turn verifies a nice third order temporal convergence order, in both the discrete $\ell^2$ and $\ell^\infty$ norms. 


	\begin{figure}
	\begin{center}
\includegraphics[width=3.0in]{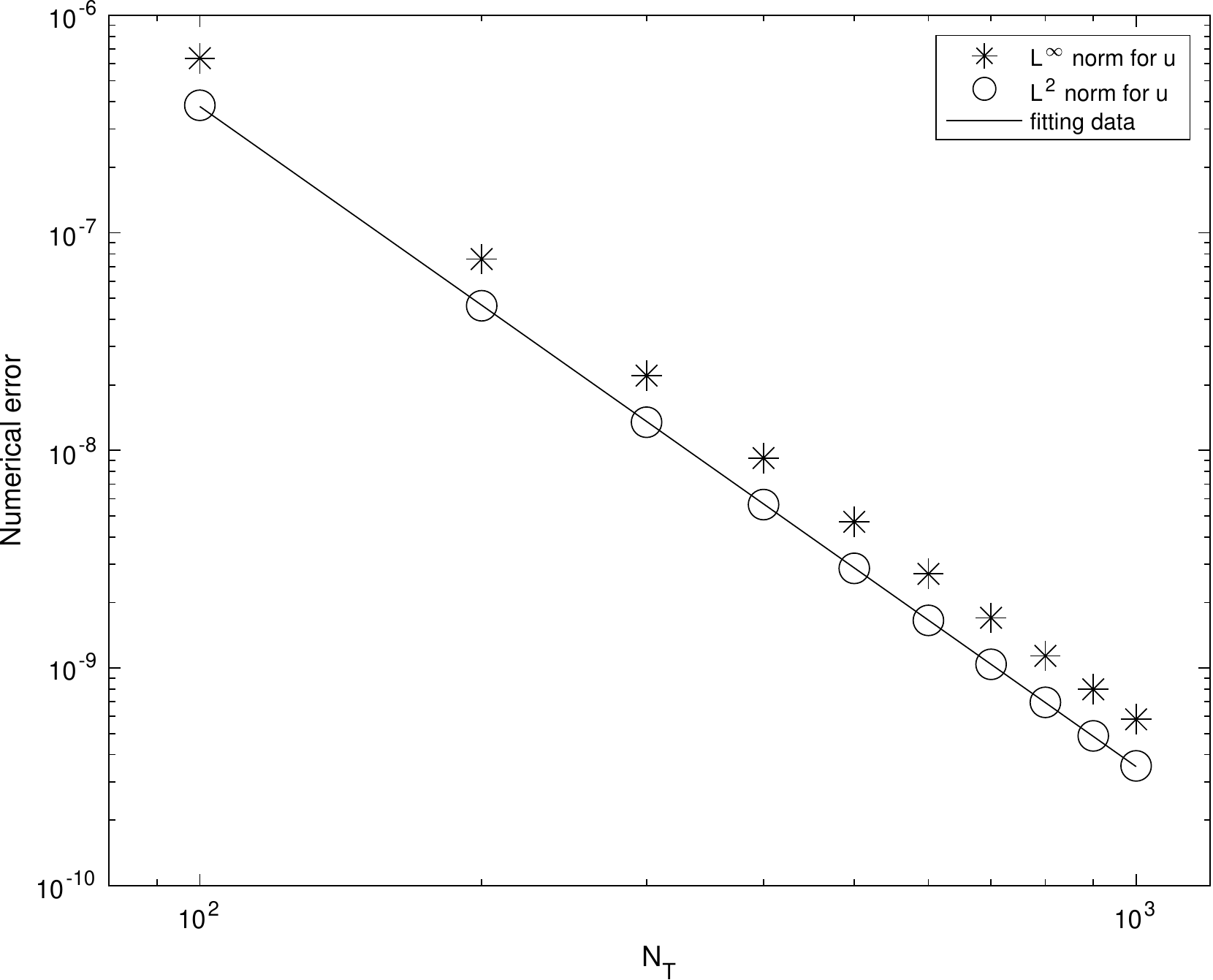}
	\end{center}
\caption{The discrete $\ell^2$ and $\ell^\infty$ numerical errors vs. temporal resolution  $N_T$ for $N_T = 100:100:1000$, with a spatial resolution $N=192$. The surface diffusion parameter is taken to be $\varepsilon=0.05$. The data lie roughly on curves $CN_T^{-3}$, for appropriate choices of $C$, confirming the full third-order accuracy of the scheme.}
	\label{fig1}
	\end{figure}
	
\subsection{Long time simulation results of the coarsening process}

With the assumption that $\varepsilon\ll\min\left\{L_x,L_y\right\}$, the temporal evolution of the solution to \eqref{equation-NSS} has always been of great interests. The physically interesting quantities that may be obtained from the solutions are (i) the energy $E(t)$; (ii) the characteristic (average) height (the surface roughness) $h(t)$;  and (iii) the characteristic (average) slope $m(t)$. These quantities are precisely defined as
	\begin{eqnarray}
h(t) &=&  \sqrt{ \frac{1}{| \Omega|} \int_{\Omega} \Bigl| u ( {\bf x}, t )  - \bar{u} (t) \Bigr|^2 \mathrm{d} {\bf x} }  \ ,  \quad  \mbox{with} \quad \,  \bar{u} (t) :=  \frac{1}{| \Omega|} \int_{\Omega}  u ( {\bf x}, t )  \mathrm{d} {\bf x} ,
 	\label{standard deviation}
	\\
m(t) &=& \sqrt{ \frac{1}{| \Omega|} \int_{\Omega}  \left| \nabla u ( {\bf x}, t ) \right|^2 \mathrm{d} {\bf x} } .
	\label{mound width}
	\end{eqnarray}
For the no-slope-selection equation~\eqref{equation-NSS}, asymptotic scaling law could be formally derived as $h\sim  O\left(t^{1/2}\right)$, $m(t) \sim O\left(t^{1/4}\right)$, and $E\sim O\left(-\ln(t)\right)$ as $t\to\infty$; see~\cite{golubovic97, libo03, libo04} and other related references. This in turn implies that the characteristic (average) length $\ell(t) := h(t)/m(t)  \sim O\left( t^{1/4}\right)$ as $t\to\infty$. In other words, the average length and average slope scale the same with increasing time.  We also observe that the average mound height $h(t)$ grows faster than the average length $\ell(t)$, which is expected because there is no preferred slope of the height function $u$.

At a theoretical level, as described in~\cite{kohn06, kohn03, libo04}, one can at best  obtain lower bounds for the energy dissipation and, conversely, upper bounds for the average height.  However, the rates quoted as the upper or lower bounds are typically observed for the averaged values of the quantities of interest.  It is quite challenging to numerically predict these scaling laws, since very long time scale simulations are needed. To capture the full range of coarsening behaviors, numerical simulations for the coarsening process require short-time and long-time accuracy and stability, in addition to high spatial accuracy for small values of $\varepsilon$.	

In this section we display the numerical simulation results obtained from the proposed third order BDF-type scheme~\eqref{scheme-BDF3-0} for the no-slope-selection equation~\eqref{equation-NSS}, and compare the computed solutions against the predicted coarsening rates.  Similar results have also been reported for many first and second order accurate numerical schemes in the existing literature, such as the ones given by~\cite{chen12, Ju18, wang10}, etc. The surface diffusion coefficient parameter is taken to be $\varepsilon=0.02$ in this article, and the domain is set as $L=L_x = L_y = 12.8$. The uniform spatial resolution is given by $h = L /N$, $N=512$, which is adequate to resolve the small structures in the solution with such a value of $\varepsilon$. The artificial regularization parameter is taken as $A=0.5$ in our simulation. 

For the temporal step size $\dt$, we use increasing values of $\dt$, namely, $\dt = 0.004$ on the time interval $[0,400]$, $\dt = 0.04$ on the time interval $[400,6000]$, $\dt=0.16$ on the time interval $[6000, 3 \times 10^5]$. Whenever a new time step size is applied, we initiate the two-step numerical scheme by  taking $u^{-1} = u^{-2} = u^0$, with the initial data $u^0$ given by the final time output of the last time period. Figure~\ref{fig3} displays time snapshots of the film height $u$ with $\varepsilon=0.02$, with significant coarsening observed in the system.  At early times many small hills (red) and valleys (blue) are present.  At the final time, $t= 300000$, a one-hill-one-valley structure emerges, and further coarsening is not possible.

\begin{figure}[h]
	\begin{center}
		\begin{subfigure}{0.48\textwidth}
			\includegraphics[height=0.48\textwidth,width=0.48\textwidth]{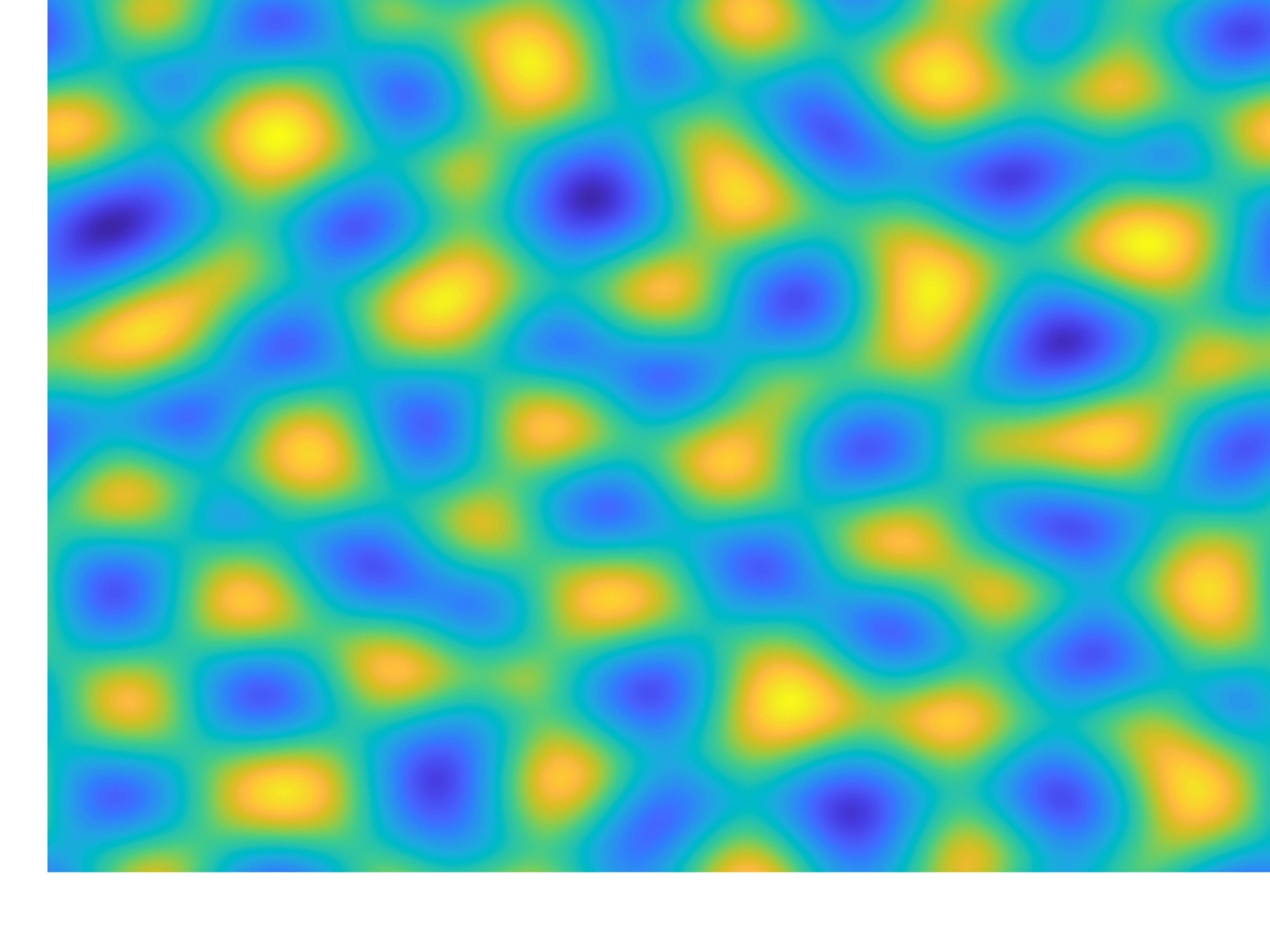}
			\includegraphics[height=0.48\textwidth,width=0.48\textwidth]{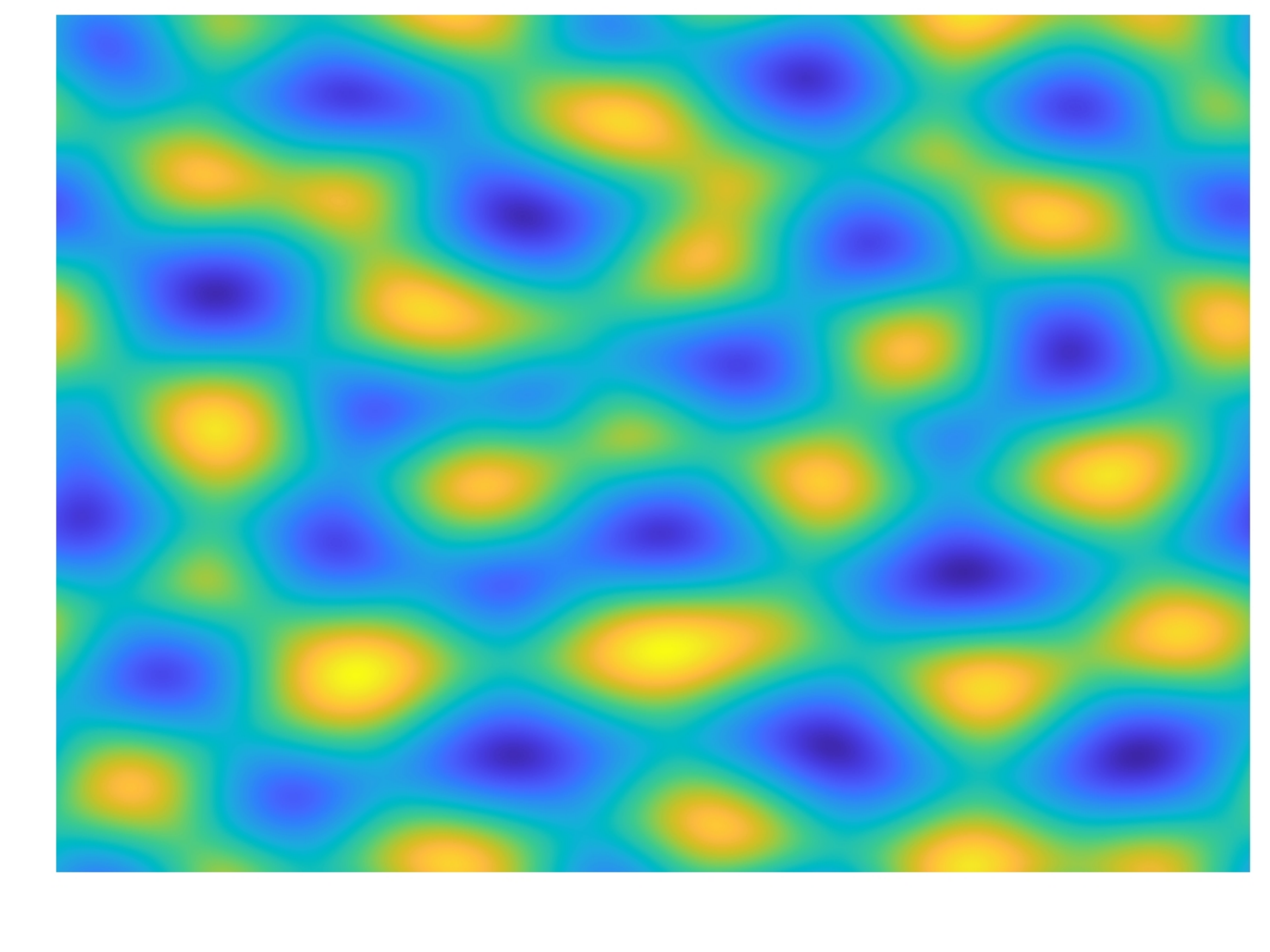}
			\caption*{$t=200, 400$}
		\end{subfigure}
		\begin{subfigure}{0.48\textwidth}
			\includegraphics[height=0.48\textwidth,width=0.48\textwidth]{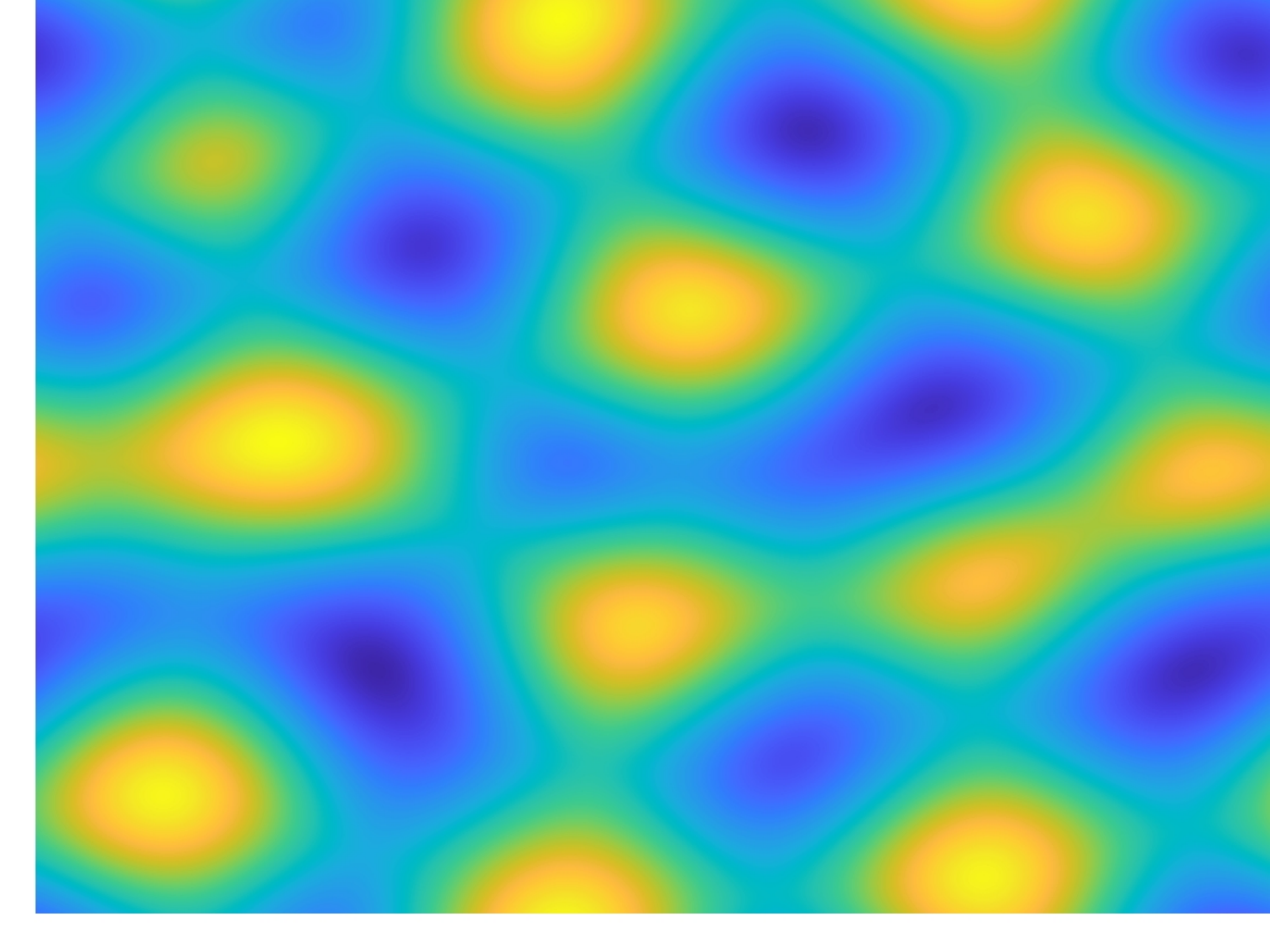}
			\includegraphics[height=0.48\textwidth,width=0.48\textwidth]{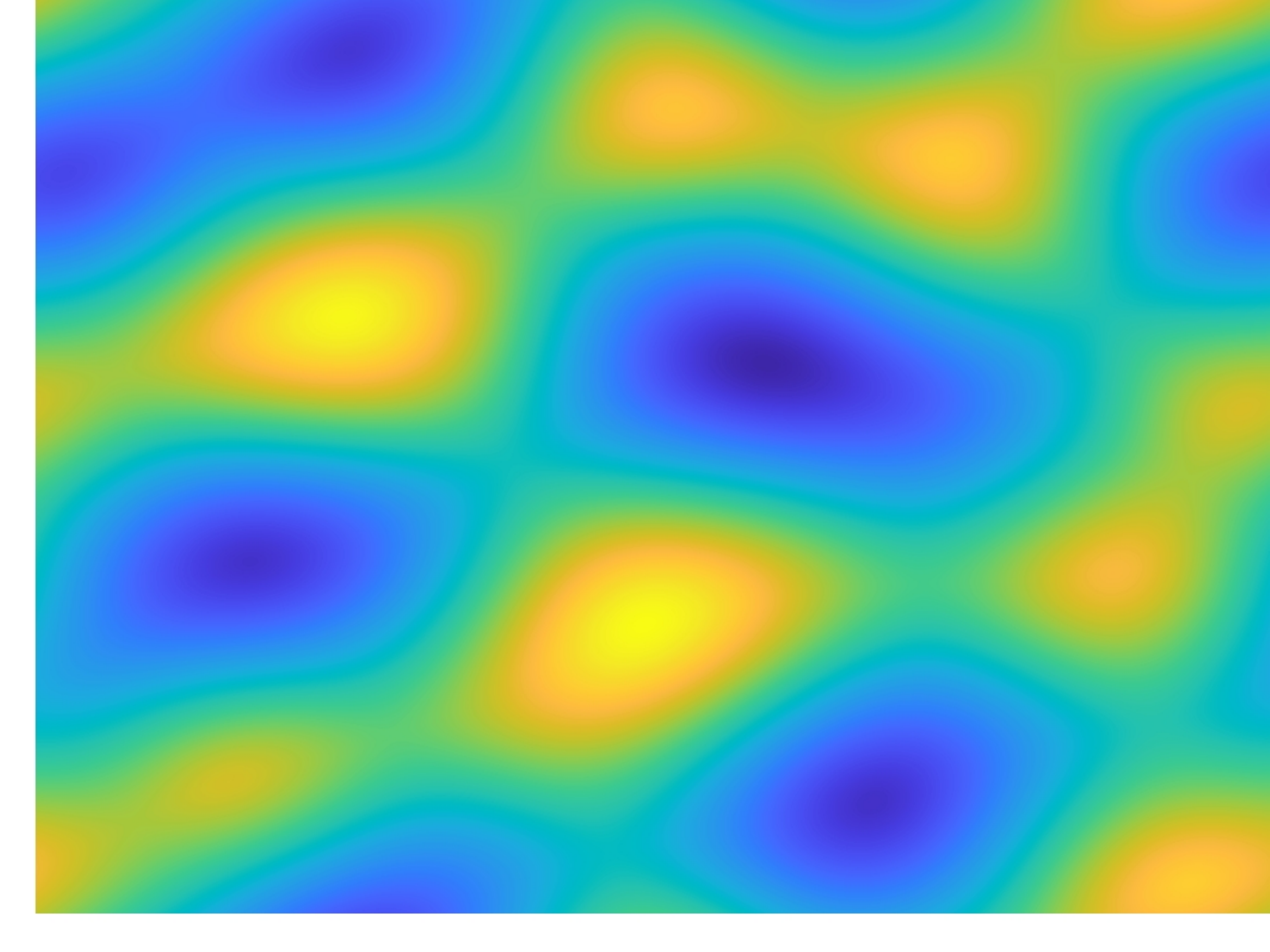}
			\caption*{$t=3000, 6000$}
		\end{subfigure}
		\begin{subfigure}{0.48\textwidth}
			\includegraphics[height=0.48\textwidth,width=0.48\textwidth]{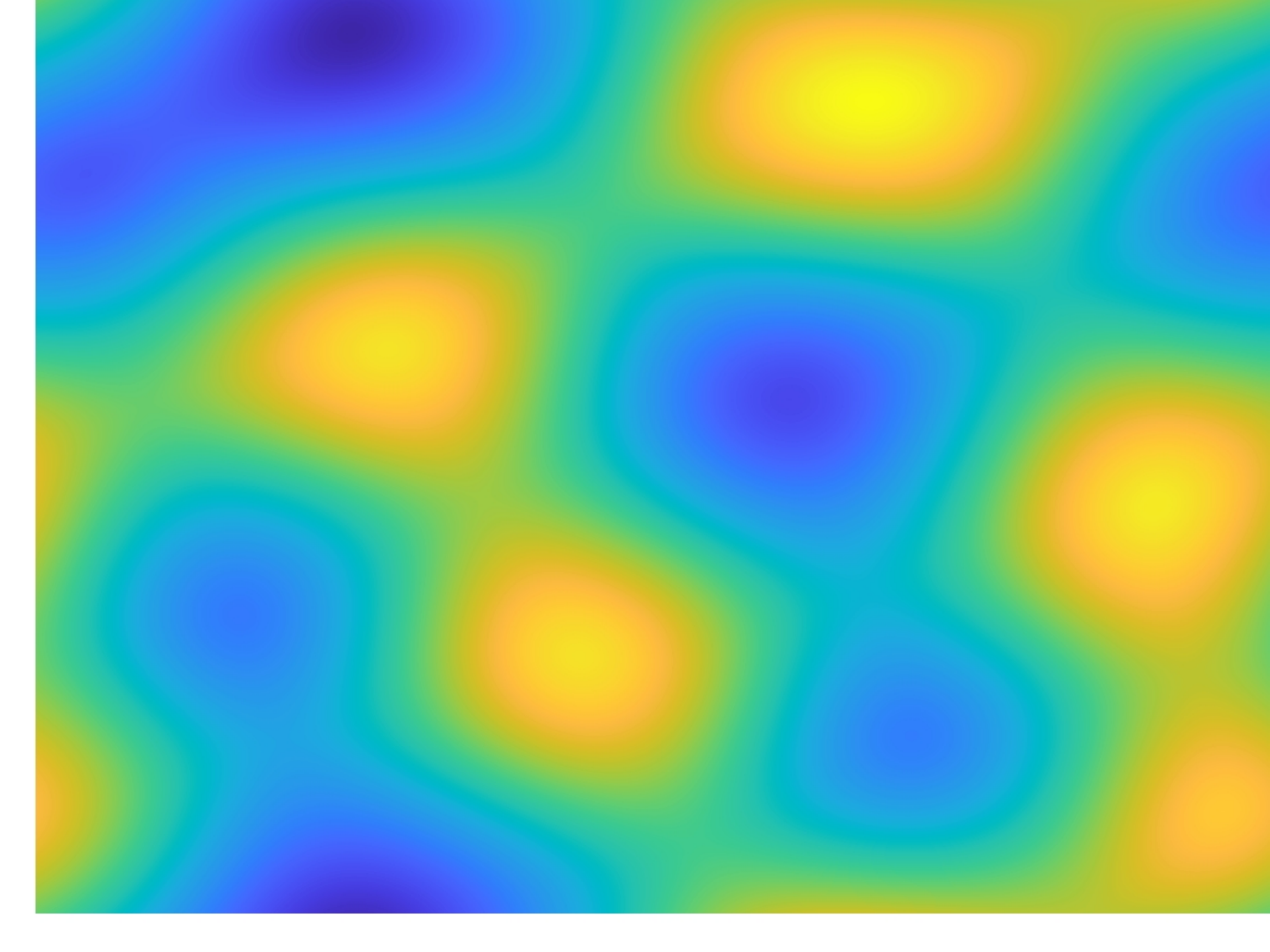}
			\includegraphics[height=0.48\textwidth,width=0.48\textwidth]{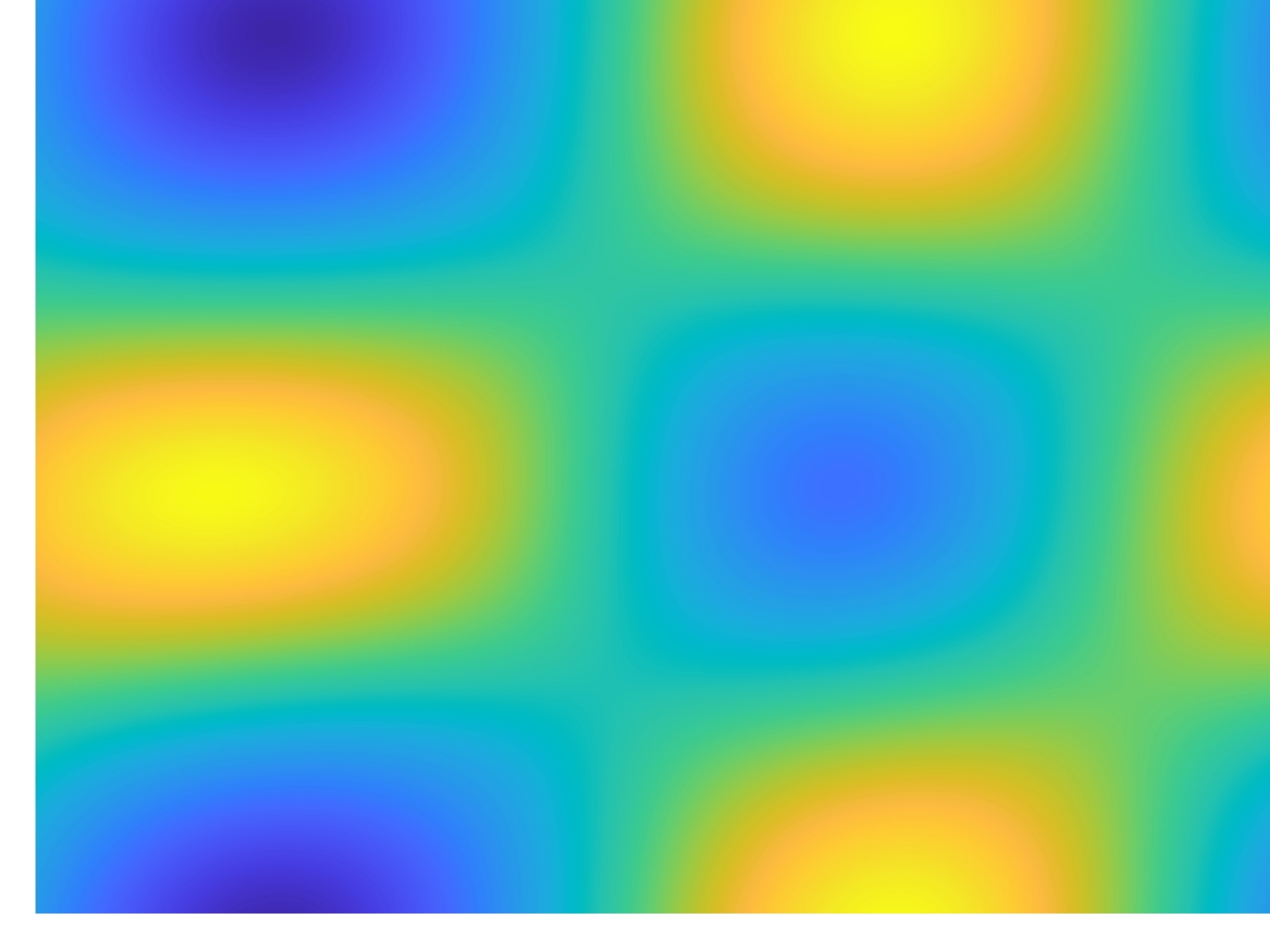}
			\caption*{$t=20000, 40000$}
		\end{subfigure}
		\begin{subfigure}{0.48\textwidth}
			\includegraphics[height=0.48\textwidth,width=0.48\textwidth]{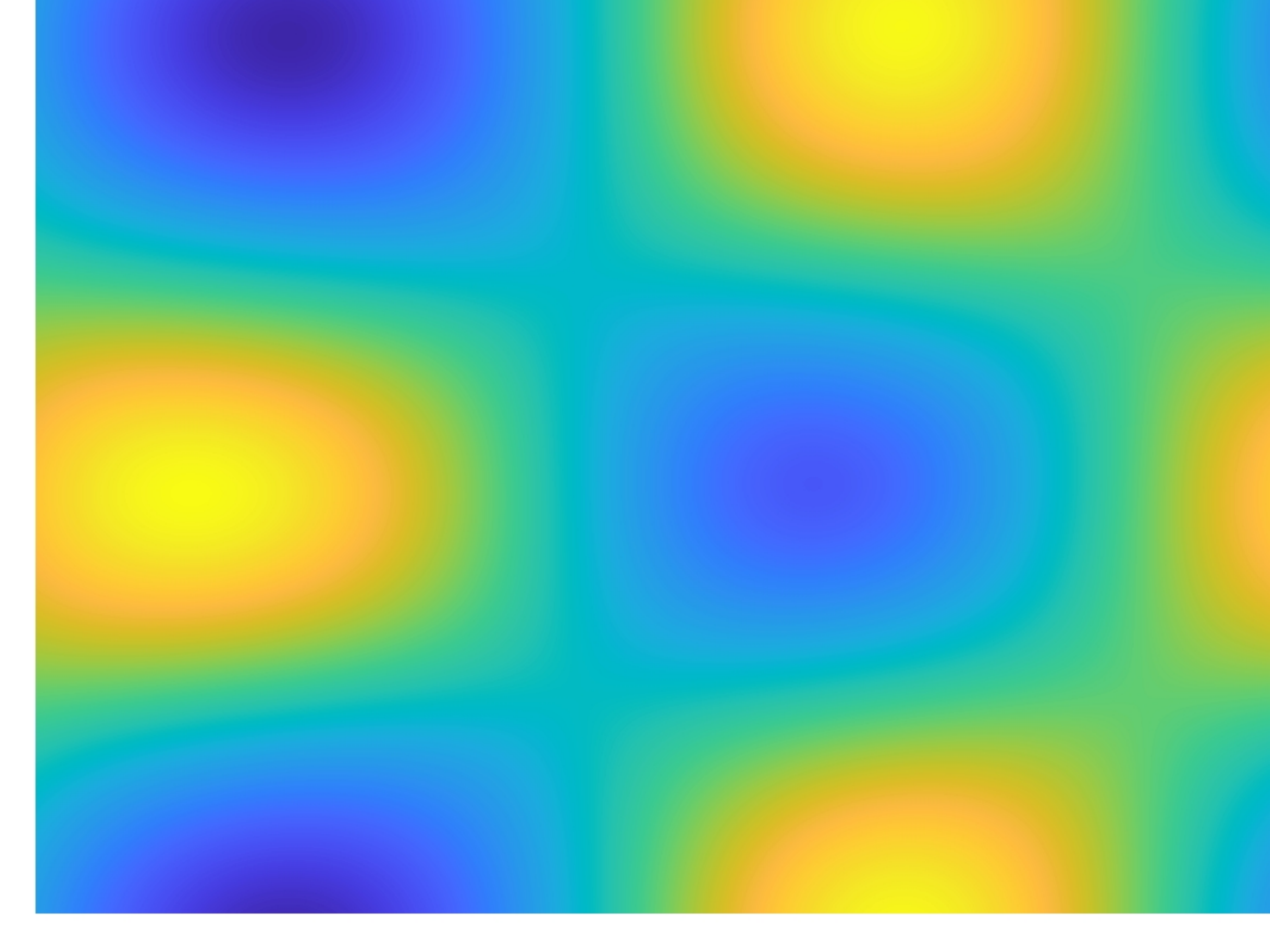}
			\includegraphics[height=0.48\textwidth,width=0.48\textwidth]{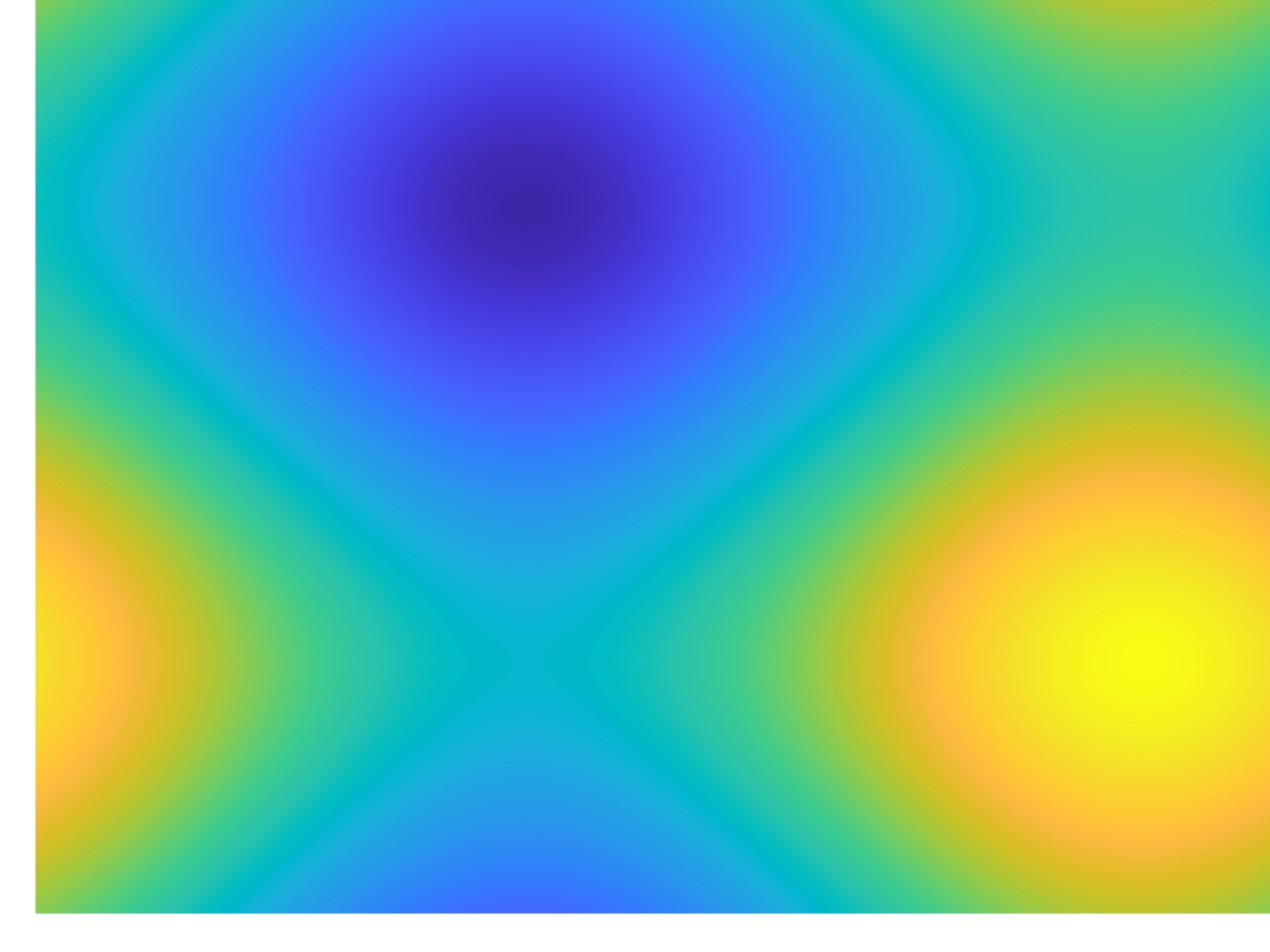}
			\caption*{$t=80000, 300000$}
		\end{subfigure}
\caption{(Color online.) Snapshots of the computed height function $u$ at the indicated times for the parameters $L =12.8$, $\varepsilon = 0.02$.  
The hills at early times are not as high as time at later times, and similarly with the valley. The average height/depth evolution with time could be seen in Figure~\ref{fig4}.}		
		\label{fig3}
	\end{center}
\end{figure}


The long time characteristics of the solution, especially the energy decay rate, average height growth rate, and the mound width growth rate, are of interest to surface physics community.  The last two quantities can be easily measured experimentally. On the other hand, the discrete energy $E_N$ is defined via~\eqref{energy-discrete-spectral}; the space-continuous average height and average slope have been defined in \eqref{standard deviation}, \eqref{mound width}, and the analogous discrete versions are also available. 
Theoretically speaking, the lower bound for the energy decay rate is of the order of $- \ln(t)$, and the upper bounds for the average height  and average slope/average length are of the order of $t^{1/2}$, $t^{1/4}$, respectively, as established for the no-slope-selection equation~\eqref{equation-NSS} in~\cite{libo04}. Figures~\ref{fig4}-\ref{fig6} present the semi-log plots for the energy versus time, log-log plots for the average height versus time, and average slope versus time, respectively, with the given physical parameter $\varepsilon=0.02$. The detailed scaling ``exponents" are obtained using least squares fits of the computed data up to time $t=400$.  A clear observation of the $- \ln(t)$, $t^{1/2}$ and $t^{1/4}$ scaling laws can be made, with different coefficients dependent upon $\varepsilon$, or, equivalently, the domain size, $L$.

	\begin{figure}
	\begin{center}
\includegraphics[width=5.0in]{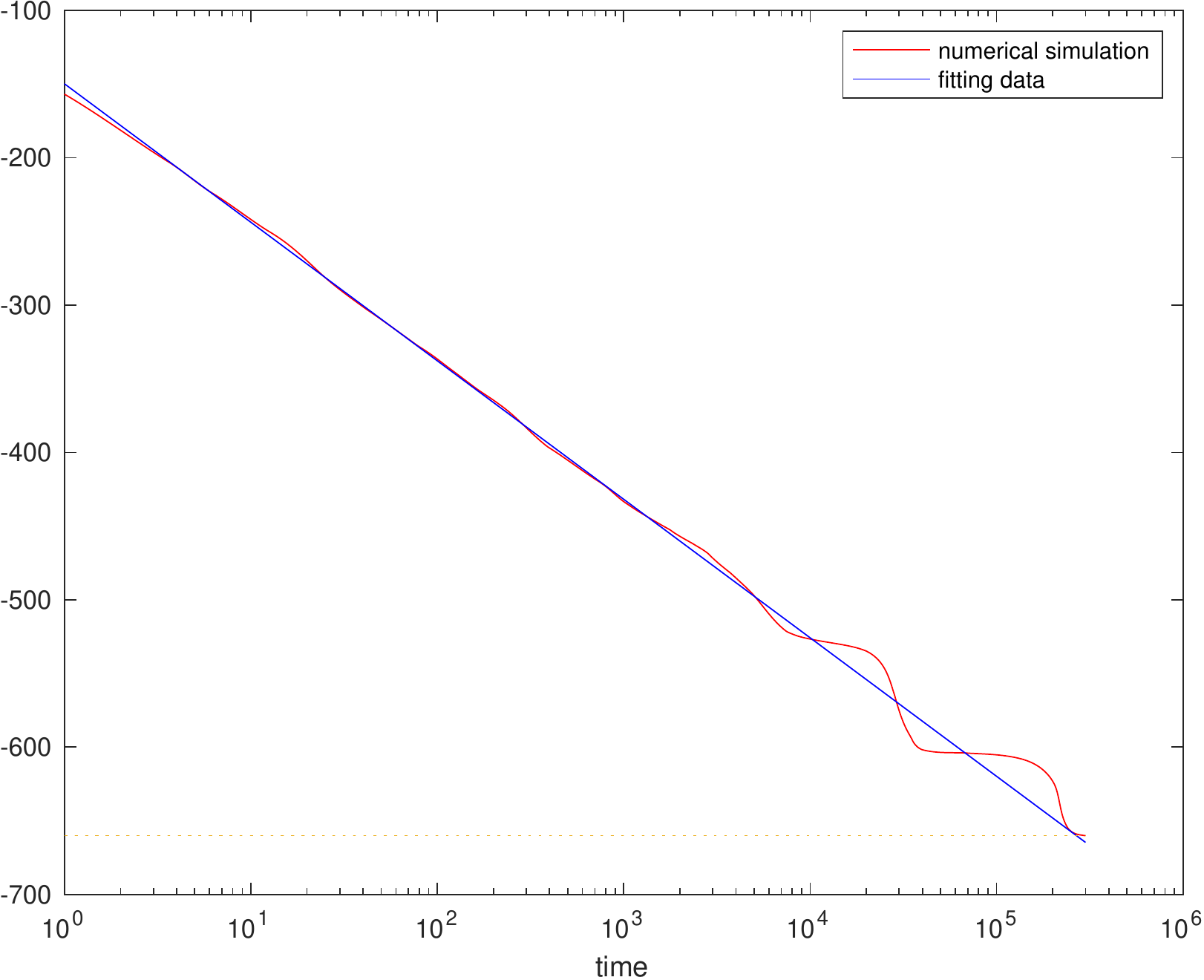}
	\end{center}
\caption{Semi-log plot of the temporal evolution the energy $E_N$ for $\varepsilon=0.02$.  The energy decreases like $-\ln(t)$ until saturation.
The dotted lines correspond to the minimum energy reached by the numerical simulation. The red lines represent the energy plot obtained by the simulations, while the straight lines are obtained by least squares approximations to the energy data.  The least squares fit is only taken for the linear part of the calculated data, only up to about time $t=400$.  The fitted line for the energy has the form $a_e\ln(t)+b_e$, with $a_e = -40.8189$, $b_e=-149.8528$.}
	\label{fig4}
       \end{figure}

	\begin{figure}
	\begin{center}
\includegraphics[width=5.0in]{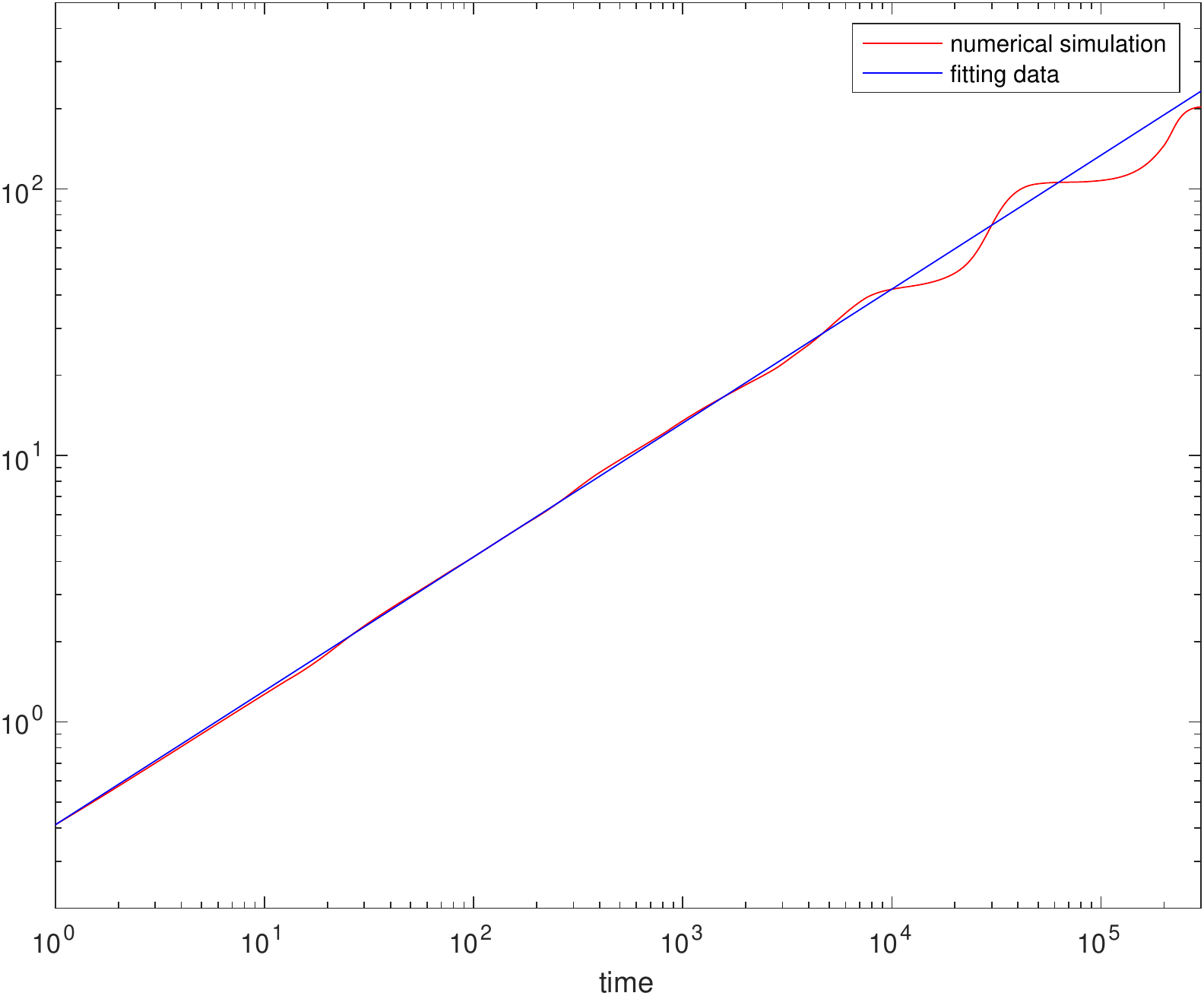}
	\end{center}
\caption{The log-log plot of the average height (or roughness) of $u$, denoted as $h(t)$, which grows like $t^{1/2}$. 
The red lines represent the average height plot obtained by the simulations, while the straight lines are obtained by least squares approximations to the numerical data.  The least squares fit is only taken for the linear part of the calculated data, only up to $t=400$.  The fitted line for the average height has the form $a_ht^{b_h}$, with  $a_h =   0.4113$, $b_h = 0.5025$.}
	\label{fig5}
       \end{figure}

	\begin{figure}
	\begin{center}
\includegraphics[width=5.0in]{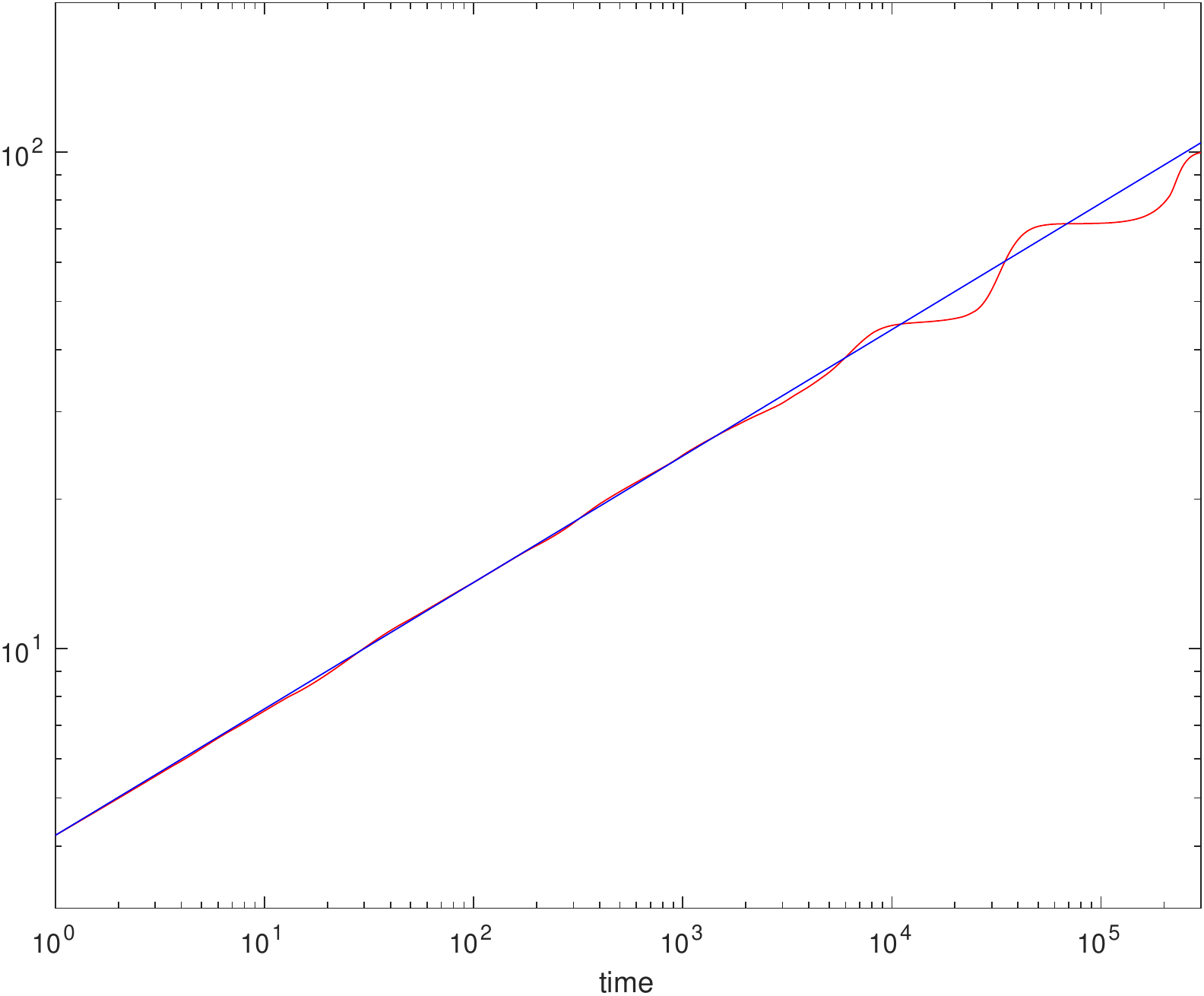}
	\end{center}
\caption{The log-log plot of the average width of $u$, denoted $m(t)$, which grows like $t^{1/4}$. The red lines represent the average width plot obtained by the simulations, while the straight lines are obtained by least squares approximations to the numerical data. Similarly, the least squares fit is only taken for the linear part of the calculated data, only up to $t=400$.  The fitted line for the average width has the form $a_m t^{b_m}$, with $a_m = 4.2063$, $b_m = 0.2547$.}
	\label{fig6}
       \end{figure}

We also recall that a lower bound for the energy (\ref{energy-NSS}), assuming $\Omega = (0,L)^2$, which has been derived and polished in our earlier works~\cite{chen12, chen14, wang10}:
	\begin{equation}
E (\phi) \ge \frac{L^2}{2}\left( \ln\left(\frac{4 \varepsilon^2\pi^2}{L^2}\right)-\frac{4\varepsilon^2\pi^2}{L^2}+1\right) =:\gamma  \ .
	\label{lower-bound}
	\end{equation}
Since the energy is bounded below, it cannot keep decreasing at the rate $-\ln(t)$.  This fact manifests itself in the calculated data as the rate of decrease of the energy, for example, begins to wildly deviate from the predicted $-\ln(t)$ curve.  Sometimes the rate of decrease increases, and sometimes it slows as the systems ``feels" the periodic boundary conditions. In fact, regardless of this later-time deviation from the accepted rates, the time at which the system saturates (\emph{i.e.}, the time when the energy abruptly and essentially stops decreasing) is roughly that predicted by extending the blue lines in Figure~\ref{fig4} to the predicted minimum energy~\eqref{lower-bound}.

	\begin{rem}
In this presented numerical simulation, the spatial resolution and time step sizes are taken as the same as the ones presented for the second order energy stable scheme~\cite{chen14}. 
For the long time simulation, both numerical schemes have produced similar evolutionary curves in terms of energy, standard deviation, and the mound width. A more detailed calculation shows that long time asymptotic growth rate of the standard deviation given by the third order numerical simulation is closer to $t^{1/2}$ than that by the second order energy stable scheme: $m_r = 0.5025$, as recorded in Figure~\ref{fig5}, while in~\cite{chen14} this exponent was found to be $m_r = 0.5132$. Similarly, the long time asymptotic growth rate of the mound width given by~\eqref{scheme-BDF3-0} is closer to $t^{1/4}$ than that by the second order energy stable scheme in: $b_m = 0.2547$, as recorded in Figure~\ref{fig6}, in comparison with $m_r = 0.2607$ reported in~\cite{chen14}. This gives more evidence that the third order BDF-type scheme is able to produce more accurate long time numerical simulation results than the second order schemes. 

Similar comparison has also been made between the second order ETD-related scheme and the proposed third order BDF-type scheme: $m_r = 0.5025$, $b_m=0.2547$ for the proposed third order scheme, in comparison with $m_r=0.510$, $b_m=0.258$, for the second order scheme as reported in~\cite{Ju18}. This gives another evidence of robustness of the third order accurate numerical scheme for the NSS equation~\eqref{equation-NSS}.

	\end{rem}

\section{Concluding remarks} \label{sec:conclusion}

In this article, we propose and analyze a third order accurate BDF-type numerical scheme for the NSS equation~\eqref{equation-NSS} of the epitaxial thin film growth model, combined with Fourier pseudo-spectral spatial discretization. The surface diffusion term is treated implicitly, while the nonlinear chemical potential is approximated by a third order explicit extrapolation formula for the sake of solvability. More importantly, a third order accurate Douglas-Dupont regularization term is added in the numerical scheme. The energy stability is derived in a modified version, based on a careful energy stability estimate, combined with Fourier eigenvalue analysis; a theoretical justification of the coefficient $A$ has also been provided. Such an energy stability analysis implies a uniform in time bound of the numerical energy. Furthermore, the optimal rate convergence analysis and error estimate are derived in details. In particular, a discrete inner product is taken with an alternate numerical term, to avoid the well-known difficulty associated with the long-stencil nature of the standard BDF3 scheme.  Some numerical simulation results are presented to demonstrate the robustness of the numerical scheme and the third order convergence. In particular, the long time simulation results have revealed that, the power index for the surface roughness and the mound width growth for $\varepsilon=0.02$ (up to $T=3 \times 10^5$), created by the proposed third order numerical scheme, is more accurate than these created by certain second order accurate, energy stable schemes in the existing literature.

	\section*{Acknowledgements}
This work is supported in part by NSFC 11971047 (Q. Huang) and NSF DMS-2012669 (C.~Wang). 

	\bibliographystyle{plain}
    \bibliography{revision}

	\end{document}